\newcommand{\newdefinition}{\theoremstyle{definition}\newtheorem}
\newcommand{\newproof}[2]{}
\DeclareMathOperator{\Set}{Set}
\DeclareMathOperator{\Prim}{P}
\DeclareMathOperator{\Ringk}{Ring_k}
\DeclareMathOperator{\Alg}{Alg_k}
\DeclareMathOperator{\Schk}{Sch_k}
\DeclareMathOperator{\Bialgk}{Bialg_k}
\DeclareMathOperator{\Algk}{Alg_k}
\DeclareMathOperator{\Algkend}{Alg_k^{end}}
\DeclareMathOperator{\Modk}{Mod_k}
\DeclareMathOperator{\Bialgkp}{Bialg_k^p}
\DeclareMathOperator{\Sym}{S}
\DeclareMathOperator{\End}{End}
\DeclareMathOperator{\Aut}{Aut}
\DeclareMathOperator{\BR}{BR_{k,k}}
\newcommand{\ZZ}{\mathbb{Z}}
\newcommand{\Ring}{{\operatorname{Ring}}}
\newcommand{\Hom}{{\operatorname{Hom}}}
\numberwithin{equation}{section}
\newtheorem{lemma}[equation]{Lemma}
\newtheorem{corollary} [equation]{Corollary}
\newdefinition{example}[equation]{Example}
\newdefinition{defn}[equation]{Definition} 
\newdefinition{remark}[equation]{Remark}
\newdefinition{notation}[equation]{Notation}
\newproof{proof}{Proof}
\newtheorem{theorem}{Theorem}[section]
\theoremstyle{definition}
\newtheorem{definition}[theorem]{Definition}
\def\@setcopyright{}
\def\serieslogo@{}
\begin{document}


%






\author{Magnus Carlson}

\address{}

\email{macarlso@math.kth.se}



\newcommand{\Spec}{\text{Spec }}

\title[Classification of plethories in characteristic zero]{Classification of plethories in characteristic zero}





\begin{abstract}
\noindent We classify plethories over fields of characteristic zero, thus answering a
question of Borger-Wieland and Bergman. All plethories over
characteristic zero fields are linear, in the sense that they are free
plethories on a bialgebra. For the proof we need some facts from the theory of ring schemes where we extend previously known results. We also classify plethories with trivial Verschiebung over a
perfect field of non-zero characteristic and indicate future work. 
\end{abstract}


\date{\today}



\maketitle
\tableofcontents




\section{Introduction}
\noindent Plethories, first introduced by Tall-Wraith \cite{TallWraith}, and then studied by Borger-Wieland \cite{BW}, are precisely the objects which act on $k$-algebras, for $k$ a commutative ring. There are many fundamental questions regarding plethories which remain unanswered. One such question is, given a ring $k,$ whether one can classify plethories over $k,$  in this paper we will take a first step towards a classification.  \\

\noindent For some motivation, let us start by looking at the category of modules $\Modk$ over a commutative ring $k.$  If we consider the category of representable functors $\Modk \rightarrow \Modk,$ there is a monoidal structure given by composition of functors. Then one defines a $k$-algebra $R$ as a $k$-module $R$ together with a comonad structure on the representable endofunctor $\Modk(R,-):\Modk \rightarrow \Modk$ with respect to composition of functors. Heuristically, this says that a $k$-algebra is precisely the kind of object which knows how to act on $k$-modules. This can be extended to a non-linear setting, so that instead of looking at $k$-modules we look at $k$-algebras $\Algk$ and consider representable endofunctors $\Algk \rightarrow \Algk.$ A comonoid with respect to composition of functors is then called a plethory and analogously, a plethory is what knows how to act on $k$-algebras. One particular important example of a plethory is the $\ZZ$-algebra $\Lambda$ which consist of the ring of symmetric functions in infinitely many variables with a certain biring structure. The functor $\Algk(\Lambda,-):\Algk \rightarrow \Algk$  represents the functor taking a ring $R$ to its ring of Witt vectors. Using plethories one gets a very conceptual view of Witt vectors and in \cite{BorgerGeom} James Borger develops the geometry of Witt vectors using the plethystic perspective. \\ 

\noindent Let now $k$ be a field. If we let $\mathcal{Pl}_k$ denote the category of plethories over $k,$  there is a forgetful functor $$F:\mathcal{Pl}_k \rightarrow \Bialgk$$ into the category of cocommutative counital bialgebras over $k.$ This functor has a left adjoint $\Sym(-):\Bialgk \rightarrow \mathcal{Pl}$ and we say that a plethory $P$ is linear if $P \cong \Sym(Q)$ for some cocommutative, counital bialgebra $Q.$ Heuristically, a plethory $P$ is linear if every action of $P$ on an algebra $A$ comes from an action of a bialgebra on $A.$ The main theorem of this paper is: 
\begin{theorem}
Let $k$ be a field of characteristic zero. Then any $k$-plethory is linear.
\end{theorem}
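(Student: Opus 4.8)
The plan is to show that, for every $k$-plethory $P$, the counit $\epsilon_P \colon \Sym(F(P)) \to P$ of the adjunction $\Sym \dashv F$ is an isomorphism. Recall first the two sides. For a cocommutative counital bialgebra $Q$, the free plethory $\Sym(Q)$ has underlying $k$-algebra the symmetric algebra $\Sym_k(Q)$; its coaddition $\Delta^+$ is the one for which every element of $Q \subseteq \Sym_k(Q)$ is primitive, while its comultiplication $\Delta^\times$, its composition product, and its units restrict on $Q$ to the coproduct, product, and (co)units of $Q$. On the other side, following Borger--Wieland, $F(P)$ is, as a $k$-module, the submodule $\Prim(P)\subseteq P$ of elements primitive for $\Delta^+$; this submodule is closed under the composition product and under $\Delta^\times$ (with $\Delta^\times(\Prim(P)) \subseteq \Prim(P)\otimes_k \Prim(P)$), and these operations make $\Prim(P)$ into a cocommutative counital bialgebra, which is $Q := F(P)$. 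Under these identifications $\epsilon_P$ is nothing but the $k$-algebra homomorphism $\Sym_k(\Prim(P)) \to P$ induced by the inclusion $\Prim(P)\hookrightarrow P$.

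Everything therefore reduces to a purely ``linear'' statement about the underlying biring of $P$: that the natural map $\Sym_k(\Prim(P)) \to P$ is an isomorphism, equivalently that the affine $k$-scheme $\Spec P$, equipped with its coaddition and its co-$k$-linear structure, is a (possibly infinite) product of copies of $\mathbb{G}_a$, with $\Prim(P)$ as its module of coordinate functions. Granting this, $\epsilon_P$ is an isomorphism of $k$-algebras; and since $\Prim(P)$ then generates $P$ as a $k$-algebra while $\Delta^\times$ and the composition product on both $\Sym(F(P))$ and $P$ are determined by their restriction to $\Prim(P)$, where by the constructions of $\Sym$ and $F$ they literally agree, the map $\epsilon_P$ automatically respects $\Delta^\times$, the composition product, and the units. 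Hence $\epsilon_P$ is an isomorphism of plethories and $P\cong \Sym(F(P))$ is linear.

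The main obstacle is thus the structural claim that the additive group scheme underlying $\Spec P$ is a product of copies of $\mathbb{G}_a$. This is where the extension of the theory of ring schemes is needed. The key point is that $\Spec P$ is not merely an affine commutative group scheme: the multiplication $\Delta^\times$ makes it an affine ring scheme, and the co-$k$-linear structure of the biring makes its additive group an affine \emph{$k$-module scheme}. Over a field of characteristic zero such a module scheme can have no multiplicative or nontrivial finite part, since neither $\mathbb{G}_m$ nor a nonzero finite group scheme admits a $k$-vector-space structure when $\operatorname{char}k = 0$; so it is pro-unipotent, and a pro-unipotent affine $k$-module scheme over a characteristic zero field is a product of copies of $\mathbb{G}_a$. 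Since plethories need not be of finite type, the actual work lies in promoting the classical finite-type structure theory of commutative affine group and ring schemes to the general affine (pro-finite) setting that plethories require; this is exactly what the ring scheme results of the earlier sections are for. Once $\Spec P$ is identified additively with $\prod_{I}\mathbb{G}_a$, its coordinate functions are precisely the $\Delta^+$-primitive elements, so $P = \Sym_k(\Prim(P))$ and the argument above concludes.
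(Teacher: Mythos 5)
Your proposal is correct and follows essentially the same route as the paper: reduce to the statement that the underlying additive group scheme of $\Spec P$ is a product of copies of $\mathbb{G}_a$ (i.e.\ $P\cong \Sym_k(\Prim(P))$ as Hopf algebras, which is Milnor--Moore once connectedness and unipotence of the ring scheme are established via the structure theory of affine ring schemes and passage to filtered limits of finite-type ones), and then observe that the resulting bijection $\Sym(\Prim(P))\to P$ respects $\Delta^\times$ and the composition product because these agree on the generating primitives --- the paper phrases this last step via the universal property of the free plethory, which is equivalent to your generation argument. The only minor divergence is how the multiplicative and \'etale parts are excluded: you invoke the co-$k$-linear (module scheme) structure and the rigidity of $\End(\mathbb{G}_m)$ and of finite group schemes, whereas the paper uses that the maximal multiplicative subgroup is an ideal on which $0$ and $1$ act identically, and that $\pi_0(\mathcal{R})$ would be a finite set carrying a $k$-algebra structure; both are valid.
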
 
\noindent This answers a question of Bergman-Hausknecht \cite[p.336]{BergmanHaus} and Borger-Wieland \cite{BW} in the positive. The theorem is proved by studying the category of affine ring schemes. We have the following results, extending those of Greenberg \cite{Greenberg63} to arbitrary fields and not necesarily reduced schemes:
\begin{theorem}
Let $k$ be a field. Then any connected ring scheme of finite type is unipotent.
\end{theorem}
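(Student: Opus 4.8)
The plan is to reduce to the algebraically closed case, prove that $G$ is affine by eliminating the abelian-variety part, and then prove unipotence by a rigidity argument using the multiplicative identity and the absorbing zero of the ring structure. Throughout, write $m\colon G\times G\to G$ for the multiplication and recall that $(G,+)$ is commutative. First I would reduce to $k=\bar k$: a ring scheme carries a zero section $\Spec k\to G$, so $G$ is a connected group scheme with a rational point, hence geometrically connected, so $G_{\bar k}$ is again a connected ring scheme of finite type; since both affineness and unipotence descend along the faithfully flat extension $\bar k/k$, it suffices to treat $G$ over $\bar k$.

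For affineness I would apply Chevalley's structure theorem to the smooth connected group $G_{\mathrm{red}}$ (smooth since $\bar k$ is perfect): there is an exact sequence $1\to H\to G_{\mathrm{red}}\xrightarrow{q} A\to 1$ with $H$ connected affine and $A$ an abelian variety, where $q$ is, up to the identity section, the Albanese map of $G_{\mathrm{red}}$. The ring structure restricts to $G_{\mathrm{red}}$, since a product of reduced schemes over a perfect field is reduced, so $q\circ m$ gives a morphism $G_{\mathrm{red}}\times G_{\mathrm{red}}\to A$ sending $(0,0)\mapsto 0$; by the universal property of the Albanese variety this factors through a homomorphism $A\times A\to A$, which is necessarily the sum of its restrictions to the two factors. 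Those restrictions vanish because $0$ is absorbing for $m$ (so $m(a,0)=0=m(0,a)$), hence $q\circ m=0$ on $G_{\mathrm{red}}\times G_{\mathrm{red}}$; taking the first argument to be $1$ and using $m(1,-)=\mathrm{id}$ gives $q=0$, so $A=0$ and $G_{\mathrm{red}}$ is affine. A Noetherian scheme is affine if and only if its reduction is, so $G$ is affine.

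For unipotence of the affine connected commutative group scheme $G$ of finite type over $\bar k$, it suffices to show there is no nonzero homomorphism $\psi\colon D\to(G,+)$ with $D\in\{\mathbb{G}_m\}\cup\{\mu_\ell:\ell\text{ prime}\}$, for then $(G,+)$ has no nontrivial subgroup of multiplicative type and is unipotent. Given such a $\psi$, consider the morphism $\Phi\colon G\to\underline{\Hom}_{\mathrm{gp}}(D,(G,+))$ sending a point $a$ to the homomorphism $t\mapsto m(a,\psi(t))$, which is additive in $t$ because $m(a,-)$ is. The image of any homomorphism out of $D$ is of multiplicative type, hence lies in the largest subgroup $G_{\mathrm{mult}}$ of multiplicative type, so by rigidity of diagonalizable groups $\underline{\Hom}_{\mathrm{gp}}(D,(G,+))=\underline{\Hom}_{\mathrm{gp}}(D,G_{\mathrm{mult}})$ is the constant group scheme on $\Hom_{\mathbb Z}(X^*(G_{\mathrm{mult}}),X^*(D))$, in particular étale over $\bar k$. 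A morphism from the connected scheme $G$ to a constant scheme is constant; since $\Phi(0)$ is the trivial homomorphism ($m(0,-)=0$) while $\Phi(1)=\psi$ ($m(1,-)=\mathrm{id}$), we conclude $\psi=0$.

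The step I expect to be the main obstacle is affineness, because $G$ need not be reduced: one must pass to $G_{\mathrm{red}}$, invoke Chevalley's theorem and the rigidity of morphisms to abelian varieties with care (checking that the ring structure really descends to $G_{\mathrm{red}}$ and that zero remains absorbing), and then recover affineness of $G$ itself from that of $G_{\mathrm{red}}$. Once that is in place the unipotence argument is essentially formal, the only nontrivial input being the identification of $\underline{\Hom}_{\mathrm{gp}}(D,(G,+))$ with an étale (indeed constant) group scheme via the multiplicative part of $G$.
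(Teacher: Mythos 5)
Your proof is correct, but it reaches the affineness input by a genuinely different route than the paper, while the unipotence step itself is essentially the paper's argument. For affineness (the paper's Theorem \ref{ringaffine}), the paper invokes Brion's structure theorem, writing any finite type group scheme over any field as an extension of an affine group by an anti-affine one, and then kills the anti-affine part using an anti-affine rigidity lemma together with the absorbing $0$ and the injectivity of multiplication by $1$; this applies directly to non-reduced groups over arbitrary fields. You instead base-change to $\bar k$, pass to $G_{\mathrm{red}}$, apply Chevalley's theorem, kill the abelian-variety quotient via the Albanese universal property plus the absorbing $0$ and the unit $1$, and recover $G$ from $G_{\mathrm{red}}$ by ``affine iff reduction affine''; this is valid but costs you the reduction steps and the rationality/Albanese facts about linear groups that Brion's theorem packages away. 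For unipotence proper, the paper takes the greatest multiplicative subgroup $\mathcal{R}_m$, observes it is an ideal, and uses rigidity of multiplicative-type groups (adapted from Milne's Theorem 14.28) to conclude that $0$ and $1$ induce the same endomorphism of $\mathcal{R}_m$, forcing $\mathcal{R}_m=0$; your family $a\mapsto m(a,\psi(-))$ valued in the \'etale Hom-scheme out of a diagonalizable $D$ is the same rigidity argument in a slightly different package, with the small advantage that you need not check the multiplicative part is an ideal. One point worth tightening: the identification of your Hom-scheme with a constant scheme must hold on all $R$-points, which is cleanest via the decomposition $G\cong G_{\mathrm{mult}}\times G_{\mathrm{unip}}$ over the perfect field $\bar k$ and the vanishing of homomorphisms from multiplicative-type groups to unipotent groups, rather than via images of field-valued homomorphisms alone.
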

\begin{theorem}
Let $P$ be a connected ring scheme of finite type over $k.$ Then $P$ is affine. 
\end{theorem}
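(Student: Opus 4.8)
The plan is to reduce the statement to the classical case of a reduced ring over an algebraically closed field, and there to combine the biadditivity of the ring multiplication with Chevalley's structure theorem for algebraic groups. Write $\mu\colon P\times_k P\to P$ for the multiplication and $e\colon \Spec k\to P$ for the multiplicative unit.

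\emph{Reductions.} Affineness of a scheme descends along faithfully flat quasi-compact base change, and it is insensitive to nilpotents (by Serre's cohomological criterion together with dévissage along the nilradical, $P$ being Noetherian). Moreover $P$, being connected with a rational point, is geometrically connected. Hence it suffices to show that $Q:=(P_{\bar k})_{\mathrm{red}}$ is affine. Since $\bar k$ is perfect, the reduction of a group scheme of finite type over $\bar k$ is smooth and is a subgroup scheme, and a product of reduced $\bar k$-schemes is reduced; therefore $Q\times_{\bar k}Q$ is reduced and the composite $Q\times Q\hookrightarrow P_{\bar k}\times P_{\bar k}\xrightarrow{\mu}P_{\bar k}$ factors through $Q$. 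Thus $Q$ is a smooth, connected (hence irreducible) commutative algebraic group over $\bar k$ equipped with a ring scheme structure, with unit induced by $e$, and we must prove $Q$ is affine.

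\emph{Producing the contradiction.} By Chevalley's theorem there is an exact sequence $0\to L\to Q\xrightarrow{\pi}A\to 0$ of commutative algebraic groups over $\bar k$ with $L$ connected linear (so affine) and $A$ an abelian variety; it is enough to show $A=0$. For any closed point $q$ of $Q$, multiplication by $q$ is an endomorphism $m_q$ of the additive group of $Q$, so $\pi\circ m_q\colon Q\to A$ is a homomorphism of algebraic groups; its restriction to $L$ is a homomorphism from a linear group to an abelian variety, hence trivial. As the closed points of $Q$ are dense, $Q\times_{\bar k}L$ is reduced and $A$ is separated, it follows that $\pi\circ\mu$ vanishes on $Q\times L$, and by commutativity of $\mu$ also on $L\times Q$. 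Expanding $\mu(\ell_1+q_1,\ell_2+q_2)$ via biadditivity and applying $\pi$, all mixed terms lie in $L=\ker\pi$, so $\pi\circ\mu$ is invariant under translation by $L\times L$ on $Q\times Q$ and descends to a morphism $\bar\mu\colon A\times A\to A$. Since $A$ is complete and $\bar\mu$ contracts $A\times\{0\}$ to a point, the rigidity lemma gives $\bar\mu=g\circ\mathrm{pr}_2$ for some $g\colon A\to A$; but $\bar\mu$ also contracts $\{0\}\times A$ to $0$, so $g=0$ and hence $\pi\circ\mu=0$. Therefore $\mu$ factors through the closed subscheme $L=\pi^{-1}(0)$ of $Q$; precomposing with $(e,\id)\colon Q\to Q\times Q$ and using $\mu\circ(e,\id)=\id_Q$ shows that $\id_Q$ factors through $L\hookrightarrow Q$, whence $Q=L$ is affine.

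\emph{Where the difficulty lies.} Over an algebraically closed field and for reduced $P$ the above is essentially Greenberg's argument: the real content is just that neither a linear group nor an abelian variety carries a nonzero biadditive multiplication admitting an abelian variety as a ring-theoretic quotient. The new and most delicate points are the reduction steps — that affineness may be tested after base change to $\bar k$ and after passing to the reduction, and above all that $(P_{\bar k})_{\mathrm{red}}$ is again a ring scheme, which hinges on reducedness being preserved under products over the perfect field $\bar k$. I expect this last point, together with making Chevalley's structure theorem available in the (now reduced, geometrically connected) situation, to be the main thing that requires care.
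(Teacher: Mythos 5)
Your proof is correct, but it takes a genuinely different route from the paper's. The paper does not pass to $\overline{k}$ or to the reduction at all: it invokes Brion's structure theorem, which for an arbitrary (possibly non-reduced, non-smooth) group scheme $G$ of finite type over any field yields an exact sequence $0\to G^{ant}\to G\to G/G^{ant}\to 0$ with $G^{ant}$ anti-affine and the quotient affine, and then applies a rigidity-type lemma of Brion for anti-affine schemes directly to the multiplication restricted to $\mathcal{R}^{ant}\times\mathcal{R}$: since $m(x,y_0)=0$ for $y_0$ the additive identity, rigidity gives $m(x,y)=m(0,y)=0$ for all $y$, and evaluating at $y=1_{\mathcal{R}}$ forces $\mathcal{R}^{ant}=0$. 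Your argument is instead essentially Greenberg's original one (Chevalley decomposition of the smooth connected group $(P_{\overline{k}})_{\mathrm{red}}$, killing the abelian-variety quotient by the rigidity lemma, then concluding with $\mu(1,x)=x$), supplemented by the reduction steps you correctly flag as the delicate new content: descent of affineness along $k\to\overline{k}$, insensitivity of affineness to nilpotents for Noetherian schemes, and the fact that $(P_{\overline{k}})_{\mathrm{red}}$ inherits the ring structure because products of reduced finite-type schemes over a perfect field are reduced. The two endgames are the same in spirit --- play rigidity against the biadditive multiplication and then use the unit section --- but the paper's choice of the anti-affine radical rather than the abelian quotient buys a shorter proof that works uniformly for non-reduced schemes over arbitrary (in particular imperfect) fields without any base change, whereas your route relies only on the classical Chevalley theorem at the cost of carrying out those reductions.
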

\noindent For the case of characteristic $p>0$ our classification results on plethories are not as complete and further work is needed to have a complete classification. To explain our classification results here we need some definitions. Let $F_k$ be the Frobenius homomorphism of $k$ and $k\langle F\rangle$ be the non-commutative ring which as underlying set is $k[F]$ and has multiplication given by $$F^i F^j = F^{i+j}$$ and $$Fa = F_k(a)F.$$ We define $\Bialgkp$ to be the category of cocommutative, counital bialgebras over $k$ which also are modules over $k \langle F \rangle.$ Once again, for a plethory over a perfect field $k$ of char $k>0$ there is a forgetful functor $$\mathcal{Pl}_k \rightarrow \Bialgkp$$ which has a left adjoint $\Sym^{[p]}.$ Call a plethory $P$ $p$-linear if $P \cong \Sym^{[p]}(Q)$ for some $Q \in \Bialgkp.$ We have then the following classification result:
\begin{theorem}
Let $k$ be a perfect field of characteristic $p>0.$ Assume that $P$ is a plethory over $k$ such that the Verschiebung $V_P=0.$ Then $P$ is $p$-linear.  
\end{theorem}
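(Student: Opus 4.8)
The plan is to imitate the proof of the characteristic-zero theorem, using the hypothesis $V_P=0$ exactly where, in characteristic zero, the structure theory of unipotent groups supplied a vector group for free. First I would reduce to the finite-type, connected case. The underlying biring of $P$ is the filtered union $P=\varinjlim_i P_i$ of its sub-$k$-birings of finite type --- the biring analogue of the statement that every element of a coalgebra lies in a finite-dimensional subcoalgebra: one takes a finite subset, adjoins the finitely many ring elements appearing in the images of the (finitely many) biring structure maps, and iterates. Dually $\Spec P=\varprojlim_i\Spec P_i$ is a cofiltered limit of affine ring schemes of finite type, each $\Spec P_i$ being a quotient ring scheme of $\Spec P$. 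Since the Verschiebung of a commutative affine group scheme passes to quotients, $V_{\Spec P_i}=0$; and one checks (as one must in any case) that a plethory over a field carries no nontrivial \'etale part, so that $\Spec P$ and each $\Spec P_i$ is connected. The two theorems stated above then show that every $\Spec P_i$ is a connected unipotent affine ring scheme of finite type.

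The second step is to identify the additive group scheme of $\Spec P$. I claim that $V_P=0$, together with the unital ring-scheme structure, forces this additive group to be a (pro-)vector group, i.e.\ a product of copies of $\mathbb{G}_a$; equivalently, $P$ is, as a $k$-algebra equipped with coaddition, the symmetric algebra $\Sym_k(W)$ on its $k$-module $W$ of additive primitives, each element of $W$ being primitive for the coaddition. Over the perfect field $k$ this is the analogue of Cartier's smoothness theorem together with the classification of unipotent groups: among connected unipotent commutative affine group schemes, those with vanishing Verschiebung that moreover admit a unital ring-scheme structure are precisely the vector groups, the unitality ruling out the infinitesimal Frobenius-kernel pieces $\alpha_{p^e}$ that $V=0$ alone would permit and forcing the relevant extensions of $\mathbb{G}_a$ to split. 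One deduces this on the finite-type layers $\Spec P_i$ from the first step and then passes to the limit.

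For the third step, one recovers the ring-scheme multiplication, its unit, and the plethystic composition $\odot$ from $W$. Since $k$ is perfect of characteristic $p$, one has $\End(\mathbb{G}_a)=k\langle F\rangle$, and bi-additive maps between vector groups are given by polynomials that are additive --- a twisted polynomial in $F$ --- in each variable. Hence the Frobenius endomorphism of the ring scheme $\Spec P$ makes $W$ a $k\langle F\rangle$-module, the remaining structure maps of $P$ induce a coalgebra structure on $W$, and the multiplicative unit and product of $\Spec P$, being determined by their restriction to primitives, upgrade $W$ to an object of $\Bialgkp$; this object is precisely the image $U(P)$ of $P$ under the forgetful functor $U\colon\mathcal{Pl}_k\to\Bialgkp$. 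One then verifies that the counit $\Sym^{[p]}(U(P))\to P$ is an isomorphism: comparing the explicit constructions, the two sides have the same underlying $k$-algebra with coaddition (a symmetric algebra on a module naturally built from $W$, with $F$ acting through $p$-th powers), and the counit is compatible with the ring-scheme multiplication and with $\odot$ because on either side these are the unique biadditive, respectively plethystic, extensions of structure already matched on $W$. Therefore $P\cong\Sym^{[p]}(U(P))$ is $p$-linear.

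I expect the main obstacle to be the second step: extracting, from $V_P=0$ together with the finiteness and affineness furnished by the two preceding theorems and the unitality of the ring multiplication, the conclusion that the additive group scheme is genuinely a vector group --- ruling out non-reduced, $\alpha_{p^e}$-type contributions and splitting the a priori possible extensions of $\mathbb{G}_a$. This is the characteristic-$p$, trivial-Verschiebung counterpart of the structural input that drives the characteristic-zero argument. Subsidiary difficulties are the control of $\pi_0(\Spec P)$ and the check that unipotence, affineness, connectedness and the vanishing of the Verschiebung are all inherited by the finite-type layers and survive the cofiltered limit; and, in the third step, making the passage through the adjunction $(\Sym^{[p]},U)$ precise enough to conclude that the counit is an isomorphism of plethories on the nose and not merely on the additive part.
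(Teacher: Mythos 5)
There is a genuine gap, and it sits exactly where you predicted: step 2. The claim that $V_P=0$ together with a unital ring-scheme structure forces the additive group to be a (pro-)vector group, i.e.\ that $P\cong\Sym_k(W)$ as a Hopf algebra with $W$ the primitives, is false. Take $\mathbb{G}_a\times\alpha_p=\Spec k[x]\otimes k[a]/(a^p)$ with componentwise addition and multiplication $(x,a)(y,b)=(xy,\;x^pb+y^pa)$: this is a connected, finite-type, commutative, associative, unital ring scheme with identity $(1,0)$ and $V=0$, and it is not reduced, so unitality does not rule out the $\alpha_{p^e}$-pieces. Moreover the statement you are trying to force is strictly stronger than the theorem: $p$-linearity asserts $P\cong\Sym^{[p]}(\Prim(P))$, the \emph{restricted} symmetric algebra, and $\Sym^{[p]}(M)$ is a polynomial algebra only when $F$ acts injectively on $M$ (e.g.\ $\Sym^{[p]}(kx)$ with $Fx=0$ is $k[x]/(x^p)=\alpha_p$). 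The Hopf-algebra-level isomorphism $P\cong\Sym^{[p]}(\Prim(P))$ is precisely clause (iii) of the classical Demazure--Gabriel equivalence for affine group schemes with $V=0$, which the paper simply quotes; it needs no unitality, no connectedness, and no finite-type reduction, so your entire steps 1--2 are replaced by a citation. (Step 1 also contains an error of its own: a plethory over a \emph{finite} field can have nontrivial \'etale part, e.g.\ $k^k$, so "no nontrivial \'etale part" is not available over all perfect fields; and birings are affine by definition, so the affineness theorem is not needed here.)

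What actually has to be proved --- and what the paper's proof consists of --- is that the group-scheme isomorphism furnished by Demazure--Gabriel is an isomorphism of \emph{plethories}. For this one shows that $\Sym^{[p]}(\Prim(Q))$ is a plethory, namely the quotient of the free plethory $\Sym(\Prim(Q))$ by the $Q$-$Q$-ideal generated by the elements $j(x)^p-j(x^p)$ (this is the content of the paper's Lemma: one checks stability of that ideal under $\Delta^+$, $\Delta^\times$, $\beta$ and $\odot$), and then that the canonical map between $Q$ and $\Sym^{[p]}(\Prim(Q))$ is the one induced by the universal property of the free plethory, hence a plethory morphism, hence an isomorphism of plethories since it is bijective. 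Your step 3 gestures at this adjunction argument, but as written it leans on the faulty identification of both sides with a polynomial algebra on $W$; the compatibility of $\odot$ with the quotient by $j(x)^p-j(x^p)$ is the one computation that cannot be waved through by "uniqueness of biadditive extensions."
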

\noindent The structure of this paper is as follows. In section $2$ we study ring schemes and prove some results which we will need for our classification theorem. The main theorems of this section that are needed for later purposes are Theorem \ref{ringunipotent} and Theorem \ref{ringfiltered}. In section $3$ we introduce plethories and $k-k$-birings and provide some examples. This section contains no new results and gives just a brief introduction to the relevant objects as defined in Borger-Wieland \cite{BW}. In section $4$ we prove that all plethories over a field $k$ of characteristic zero is linear using the results from section $2.$ We also show that any $k-k$-biring is connected. In section $5$ we prove some initial classification results regarding plethories in characteristic $p>0.$	 
\section*{Notation and conventions}
\begin{list}{}{\labelwidth 8ex \leftmargin 10ex \labelsep 2ex \itemsep 2pt \parsep 0pt}
\item[{$\Ring$}] category of commutative and unital rings. 
\item[{$\BR$}] category of $k-k$-birings.
\item[{$\mathcal{Pl}_k$}] category of $k$-plethories. 
\item[{$\Bialgk$}] category of cocommutative $k$-bialgebras.
\item[{$\Bialgkp$}] category of cocommutative $k$-$p$-bialgebras.
\item[{$\odot$}] composition product of $k-k$-birings, Def.  3.2.
\item[{$\Algk$}] category of commutative algebras over the ring $k.$ 
\item[{$\Delta^+_A,\Delta^\times_A$}] coaddition resp. comultiplication map for a biring $A.$ 
\item[{$\epsilon^+_A,\epsilon^\times_A$}] counit for coaddition resp. comultiplication for a biring $A.$ 
\item[{$\beta_A$}] co-$k$-algebra strucutre on a $k-k$-biring $A.$ 
\item[{$\Delta_2^+, \Delta_2^\times$}] abbreviation for the composite $(1 \otimes \Delta^+)\circ \Delta^+$ resp. $(1 \otimes \Delta^\times) \circ \Delta^\times.$
\item[{$P$}] primitive elements functor 
\item[{$\mathcal{O}_X$}] structure sheaf of a scheme $X.$ 
\item[{$\Schk$}] category of $k$-schemes for $k$ a commutative ring. 
\item[{$\mathbb{G}_a$}] the affine line viewed as a group scheme, see Ex. 3.1
\item[{$\mathbb{G}_m$}] the multiplicative group scheme, after Def. 2.5.
\item[{$\mu_p$}] the p-th root of unity group scheme, Ex. 5.2
\item[{$\alpha_p$}] see Ex. 3.3
\item[{$\pi_0(G)$}] the group scheme of connected components of a group scheme $G$ over the field $k,$  Def. 4.2
\item[{$\Sym$}] free plethory functor on a cocommutative bialgebra. Def. 4.1
\item[{$\Sym^{[p]}$}] free plethory functor on a cocommutative $p$-bialgebra, after Def. 5.1.
\item[{$G^\circ$}] the identity component of a group scheme $G.$ 
\item[{$F_G,V_G$}] the Frobenius resp. Verschiebung morphism of a group scheme $G$ over a perfect field of characteristic $p>0.$ 
\item[{$k\langle F \rangle$}] the twisted polynomial algebra.
\end{list}
For us, all rings are commutative and unital. We will use Sweedler notation for coaddition $\Delta^+$ and $\Delta^\times$, so that $\Delta^+(x) = \sum_i  x_i^{(1)} \otimes x_i^{(2)}$ and $\Delta^\times(x) = \sum_i x_i^{[1]} \otimes x_i^{[2]}$  if $x \in A$ where $A$ is a biring. For concepts from the theory of group schemes not introduced properly here, we refer to \cite{MilneiAG} or \cite{DemazureG} .
\subsection*{Acknowledgements}
I am very grateful to James Borger who supplied me with the conjecture for characteristic zero and the idea of ''weakly linear'' plethories. He has been more than generous with his knowledge and many of the ideas in this paper come from conversations with him. I also thank David Rydh and Lars Hesselholt for the many useful comments they gave which helped improve the quality of this article. I would also like to thank my advisor Tilman Bauer for his support and for his many thoughtful suggestions on this article.
\section{Ring schemes}
\noindent Let $k$ be a commutative ring. Recall that $\mathcal{R}$ is a ring scheme over $k$
if $\mathcal{R}$ is a scheme together with a lift of the functor $$\Schk(-,\mathcal{R}):\Schk \rightarrow \Set$$
to $\Ring.$ We say
that a ring scheme is a $k$-algebra scheme if the lift factors through the category of $k$-algebras. We will mostly be concerned with affine
ring schemes. Ring schemes were studied by Greenberg in \cite{Greenberg63} and he showed that for connected, reduced ring schemes of finite
type over an algebraically closed field $k,$ the underlying scheme
is always affine. Further, he shows that the underlying group variety is
always unipotent. We improve on these results by showing that any connected ring schemes of finite type over
an arbitrary field is affine, and that the 
underlying group scheme is always unipotent. From now on, in this section, $k$ is always
a field.
\begin{definition}
A $k$-scheme $X$ is anti-affine if $\mathcal{O}_X(X)=k.$ We say that a group
scheme is anti-affine if its underlying scheme is anti-affine
\end{definition}
\noindent
For example, abelian varieties are all anti-affine group schemes. An anti-affine group scheme has the property that any morphism from it into an affine group scheme is trivial. Anti-affine groups are very important for the structure of group schemes as the following theorem shows:
\begin{theorem} [{\cite[Theorem 1]{BrionStr}}] \label{Brionstructure}
If $G$ is a group scheme of finite type over a field $k$ there is an exact sequence of group schemes $$0 \rightarrow G^{ant} \rightarrow G \rightarrow G/G^{ant} \rightarrow 0$$ such that $G^{ant}$ is anti-affine and $G/G^{ant}$ is affine.
\end{theorem}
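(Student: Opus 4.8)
The plan is to take $G^{ant}$ to be the largest anti-affine subgroup scheme of $G$, and then to prove that the quotient $G/G^{ant}$ is affine.

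First I would record the basic closure properties of anti-affineness. A finite-type group scheme $E$ over $k$ is anti-affine if and only if every homomorphism from $E$ to an affine group scheme is trivial: one implication is the remark preceding the theorem, and the converse holds because the affinization $E^{aff} := \Spec\Gamma(E,\mathcal{O}_E)$ is an affine group scheme (the affinization functor is monoidal over a field, since $\Gamma(X\times_k Y,\mathcal{O}) = \Gamma(X,\mathcal{O})\otimes_k\Gamma(Y,\mathcal{O})$ by flat base change along $X\to\Spec k$) equipped with a canonical homomorphism $E \to E^{aff}$. From this criterion it follows formally that the class of anti-affine finite-type group schemes is stable under quotients, under extensions, and under passing to the subgroup generated by a family of anti-affine subgroup schemes; conjugates of anti-affine subgroup schemes are trivially anti-affine. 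Since $G$ is Noetherian, there is therefore a maximal anti-affine subgroup scheme $G^{ant}$; it contains every anti-affine subgroup scheme, and since it is stable under conjugation it is normal in $G$ (one also shows $G^{ant}$ is central in $G^\circ$).

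The substantive step is to show $Q := G/G^{ant}$ is affine. By maximality $Q$ has no nontrivial anti-affine subgroup scheme: the preimage in $G$ of such a subgroup would be an extension of $G^{ant}$ by an anti-affine group, hence anti-affine by the closure property, hence contained in $G^{ant}$. So it suffices to prove that a finite-type group scheme with no nontrivial anti-affine subgroup scheme is affine. Affineness descends along $k \to \bar{k}$, and the formation of the largest anti-affine subgroup scheme commutes with this extension (anti-affineness is insensitive to field extension by flat base change on global sections, and $(G_{\bar{k}})^{ant}$ is Galois-stable, hence defined over $k$), so I may assume $k$ algebraically closed. Replacing $Q$ by $Q^\circ_{\mathrm{red}}$ — a closed subgroup scheme, so still without anti-affine subgroups, and affine if and only if $Q$ is, since $Q/Q^\circ$ is finite and $Q^\circ_{\mathrm{red}}\hookrightarrow Q^\circ$ is defined by a nilpotent ideal — I reduce to $Q$ smooth and connected. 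Chevalley's structure theorem then gives $1 \to L \to Q \to A \to 1$ with $L$ affine connected and $A$ an abelian variety, and I must show $A = 0$.

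This last point is the main obstacle, and it is exactly where the structure theory of anti-affine groups enters. The idea is to analyse $\pi_*\mathcal{O}_Q$ for the affine morphism $\pi\colon Q \to A$: it is a sheaf of $\mathcal{O}_A$-algebras built out of line bundles lying in $\mathrm{Pic}^0(A)$, and from this one produces, whenever $A \neq 0$, a nontrivial anti-affine subgroup scheme of $Q$ surjecting onto $A$, contradicting the reduction above; hence $A = 0$ and $Q = L$ is affine. Carrying this out amounts to the classification of anti-affine algebraic groups, so at this point I would invoke \cite{BrionStr}. Finally, everything fits together via the affinization: since $Q = G/G^{ant}$ is affine, the universal property of $G^{aff} = \Spec\Gamma(G,\mathcal{O}_G)$ factors $G \to Q$ through the canonical homomorphism $\alpha_G\colon G \to G^{aff}$, so $\ker\alpha_G \subseteq \ker(G\to Q) = G^{ant}$, while $G^{ant} \subseteq \ker\alpha_G$ because $G^{ant}$ is anti-affine and $G^{aff}$ is affine; hence $G^{ant} = \ker\alpha_G$, the quotient $G/G^{ant}\cong G^{aff}$ is affine, and the asserted exact sequence follows. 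As by-products, $\Gamma(G,\mathcal{O}_G)$ is a finitely generated $k$-algebra and $\alpha_G$ is faithfully flat, the latter because a scheme-theoretically surjective homomorphism of finite-type group schemes over a field is faithfully flat by generic flatness and homogeneity.
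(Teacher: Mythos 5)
The paper does not prove this statement at all: it is imported verbatim as \cite[Theorem 1]{BrionStr}, so there is no internal proof to compare against. Your outline is, in substance, a faithful roadmap of Brion's own argument (maximal anti-affine subgroup via the closure properties of anti-affineness, reduction to showing that a group with no nontrivial anti-affine subgroup is affine, passage to $\overline{k}$ and to the smooth connected case, Chevalley, and finally the identification $G/G^{ant}\cong \text{Spec } \Gamma(G,\mathcal{O}_G)$). But you should be clear that it is not an independent proof: the decisive step --- that a nontrivial abelian variety quotient $Q\to A$ forces a nontrivial anti-affine subgroup of $Q$, equivalently the finite generation of $\Gamma(G,\mathcal{O}_G)$ and the faithful flatness of the affinization --- is exactly the content you defer back to \cite{BrionStr}, so the net logical status of your write-up is the same as the paper's bare citation, just with the reductions made explicit. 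Three small cautions on the reductions themselves. First, closure of anti-affineness under ``the subgroup generated by a family'' is not formal from the homomorphism criterion: you need anti-affine subgroups to be smooth, connected, commutative and central in $G^\circ$ \emph{before} you can realize the generated subgroup as the image of a multiplication homomorphism from their product, so the parenthetical centrality remark has to come earlier in the argument (or one should instead define $G^{ant}$ directly as $\ker\alpha_G$ and prove it is anti-affine, which is Brion's order of deduction). Second, the Galois-stability descent from $\overline{k}$ only works as stated for $k$ perfect; for general $k$ one should descend from $k^{sep}$ (anti-affine groups are smooth) or invoke the compatibility of $G\mapsto G^{ant}$ with arbitrary field extensions. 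Third, the closing remark that surjectivity plus ``generic flatness and homogeneity'' gives faithful flatness needs the target reduced, which $G^{aff}$ need not be in positive characteristic; this is harmless here because you have already built $G/G^{ant}$ as an fppf quotient, but it should not be presented as an independent reason.
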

\noindent We will now want to show that all connected finite type ring schemes are affine, i.e that in the above exact sequence $G^{ant} = \Spec k.$ For this, we will need the following lemma.
\begin{lemma}
Let $X,Y,Z$ be $k$-schemes with $X$ quasi-compact and anti-affine and
$Y$ locally noetherian and irreducible.Suppose that $f:X \times Y
\rightarrow Z$ is a morphism such that there exist k-rational points
$x_0 \in X,$ $y_0 \in Y$ such that $f(x,y_0) =f(x_0,y_0)$ for all $x.$
Then $f(x,y)=f(x_0,y)$ for all $x,y.$
\end{lemma}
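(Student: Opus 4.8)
The plan is to run the rigidity lemma of abelian-variety theory (Mumford), with anti-affineness of $X$ taking over the role normally played by properness. The one structural fact about $X$ that I would use is that, being quasi-compact, quasi-separated and anti-affine, its base change $X_T=X\times_k T$ along any $k$-scheme $T$ satisfies $(p_T)_*\mathcal{O}_{X_T}=\mathcal{O}_T$ by flat base change along $T\to\Spec k$; in particular $\mathcal{O}_{X_T}(X_T)=\mathcal{O}_T(T)$ when this makes sense, and \emph{every} morphism $X_T\to\Spec B$ to an affine scheme factors uniquely through $p_T$. Taking $T=Y$, this already settles the lemma when $Z$ is affine (with no need for $y_0$ or for irreducibility of $Y$): then $f$ factors through $p_2\colon X\times Y\to Y$, and evaluating at $x_0$ forces the factorisation to be $f|_{\{x_0\}\times Y}\circ p_2$, whence $f(x,y)=f(x_0,y)$.

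For a general target I would reduce to the affine case. Put $z_0=f(x_0,y_0)$ and choose an affine open $U\subseteq Z$ containing $z_0$; then $f^{-1}(U)$ is an open of $X\times Y$ containing the whole fibre $X\times\{y_0\}=p_2^{-1}(y_0)$. The crucial step is to produce an open neighbourhood $V$ of $y_0$ in $Y$ with $X\times V\subseteq f^{-1}(U)$. Granting this, $f$ restricted to $X\times V$ is a morphism to the affine scheme $U$, hence (by the previous paragraph, with $T=V$) factors through $p_2|_{X\times V}$, and comparing with the section $\{x_0\}\times V$ gives $f(x,y)=f(x_0,y)$ for all $(x,y)\in X\times V$.

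It then remains to propagate this identity from $X\times V$ to all of $X\times Y$. For this I would use that $Y$ is irreducible, so $V$ is dense and contains the generic point $\eta$; that $X\to\Spec k$, hence $p_2$, is flat, so that $p_2^{-1}(V)$ is dense in $X\times Y$ (generizations lift along $p_2$) and, after shrinking $V$ to a suitable dense affine open — here local noetherianity of $Y$ is used to control its associated points — even schematically dense; and that $Z$ is separated, so that the locus where $f$ and $f|_{\{x_0\}\times Y}\circ p_2$ agree is a closed subscheme of $X\times Y$. A closed subscheme containing the schematically dense open $X\times V$ is all of $X\times Y$, which is the assertion.

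The main obstacle is the middle step: producing $V$. When $X$ is proper this is immediate, since $p_2$ is then closed, so $p_2\big((X\times Y)\setminus f^{-1}(U)\big)$ is a closed set missing $y_0$ and its complement is the required $V$. But an anti-affine scheme need not be proper — a non-split semi-abelian variety is anti-affine yet contains a torus as a closed subscheme — so $p_2$ need not be closed and the naive ``tube lemma'' genuinely fails. Making the step work must exploit anti-affineness more deeply: I would pass to the local scheme $\Spec\mathcal{O}_{Y,y_0}$, whose closed fibre is $X\times\{y_0\}$, and use the constancy of $f$ there together with the affine case applied over the infinitesimal neighbourhoods of $y_0$ (over which it forces the data cutting out $f$ near $y_0$ — e.g.\ when $Z$ is quasi-projective, the triviality of $f^*\mathcal{L}$ on $X\times\{y_0\}$ and a seesaw argument — to be constant), and then spread out from $y_0$ to an honest open $V$ using that $Y$ is locally noetherian. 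The non-reduced case of $Y$ requires the extra care already flagged in the density step, since a dense open of a non-reduced irreducible scheme need not be schematically dense.
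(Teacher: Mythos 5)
Your opening and closing moves are sound, and you have correctly located the crux. The affine case via $\mathcal{O}_{X\times_k T}(X\times_k T)=\mathcal{O}_T(T)$ is the standard use of anti-affineness (it is the same K\"unneth fact, \cite[Lemma 2.3.3]{BrionStr}, that this paper invokes in the proof of Theorem \ref{ringaffine}), and you are right both that everything reduces to producing a tube $X\times V\subseteq f^{-1}(U)$ and that the naive tube lemma fails: quasi-compactness of $X$ does not help because boxes $X'\times V'$ are not a basis of the Zariski topology on a fibre product, and anti-affine schemes need not be proper. Note also that the paper offers no argument of its own to compare against --- its ``proof'' is precisely the citation \cite[Lemma 3.3.3]{BrionStr} --- so this step cannot be discharged by matching the text.

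That step, however, is exactly where your proposal stops being a proof. What you supply for it is a programme with two unfilled holes. First, the seesaw mechanism needs $Z$ to carry enough line bundles (you concede this with ``when $Z$ is quasi-projective''), but $Z$ is an arbitrary $k$-scheme here. Second, and more seriously, the passage from the infinitesimal neighbourhoods of $y_0$ to an honest open $V$ is not a routine spreading-out: the thickenings $X\times\Spec(\mathcal{O}_{Y,y_0}/\mathfrak{m}^n)$ all have underlying space $X\times\{y_0\}$, so constancy of $f$ on them (which does follow from your affine case, since they land in $U$) says nothing yet about any point of $X\times Y$ lying over some $y\neq y_0$; and Krull intersection controls $\bigcap_n\mathfrak{m}^n$ in $\mathcal{O}_{Y,y_0}$, not in the rings $\mathcal{O}(X')\otimes_k\mathcal{O}(V)$ where the conclusion must be read off. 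This formal-to-open passage is the actual content of the lemma and has to be argued; one workable route, close to what \cite{BrionStr} does, is to use that $p_2$ restricted to the closed set $(X\times Y)\setminus f^{-1}(U)$ is a quasi-compact morphism, so the closure of its image is its specialization-closure, and then to show $y_0$ does not lie in that set --- this is where irreducibility and local noetherianity of $Y$ genuinely enter. Finally, your last step quietly assumes $Z$ separated (so that the agreement locus of $f$ and $f(x_0,-)\circ p_2$ is closed), which is not among the hypotheses; since the stated conclusion is only pointwise, you should either add that assumption explicitly or argue fibre by fibre rather than by schematic density.
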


\begin{proof}
see \cite[Lemma 3.3.3.]{BrionStr}. \end{proof}
\begin{theorem} \label{ringaffine}
Let $\mathcal{R}$ be a connected ring scheme of finite type over $k.$ Then $\mathcal{R}$ is affine.
\end{theorem}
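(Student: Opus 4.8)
The plan is to use Brion's structure sequence (Theorem \ref{Brionstructure}) and show that the anti-affine part $\mathcal{R}^{ant}$ is trivial. Write $G = \mathcal{R}$ viewed as a commutative group scheme under addition, and let $A = G^{ant} \subseteq G$ be its maximal anti-affine subgroup scheme, which is in particular connected and of finite type. The multiplication morphism $m \colon \mathcal{R} \times \mathcal{R} \to \mathcal{R}$ restricts to a morphism $m \colon A \times \mathcal{R} \to \mathcal{R}$; I want to show its image lies in $A$, i.e. that multiplication by any element of $A$ stays inside $A$, and then to exploit the ring axioms (the existence of a multiplicative identity $1$) to derive a contradiction unless $A = \Spec k$.

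First I would show $m(A \times \mathcal{R}) \subseteq A$. For a fixed $k$-point $a \in A$, the translation-type argument: the map $\mathcal{R} \to \mathcal{R}$, $r \mapsto a \cdot r$ is additive (by the distributive law, $a\cdot(r+s) = a\cdot r + a\cdot s$ and $a \cdot 0 = 0$), so it is a homomorphism of group schemes $\mathcal{R} \to \mathcal{R}$. Composing with the projection $\mathcal{R} \to \mathcal{R}/A$ (which is affine) gives a homomorphism from $\mathcal{R}$ to an affine group; restricted to $A$ this must be trivial since $A$ is anti-affine, so $a \cdot A \subseteq A$ for every $k$-point $a$. To handle all of $A$ at once (not just $k$-points) I would apply Lemma 2.4 to the morphism $f \colon A \times \mathcal{R} \to \mathcal{R}/A$ obtained by composing $m|_{A \times \mathcal{R}}$ with the quotient: taking $x_0 = 0 \in A$ and noting $A$ is quasi-compact anti-affine while $\mathcal{R}$ is locally noetherian and irreducible (it is connected and, being a ring scheme of finite type, geometrically integral — or one passes to $\mathcal{R}^{\circ}$ and uses that over a field connected finite-type group schemes are irreducible), the lemma forces $f$ to factor through the projection to $\mathcal{R}$... more carefully, I would run the lemma with the roles arranged so that $f(a, r) = f(0, r)$ for all $a, r$, which says exactly $m(A \times \mathcal{R}) \subseteq A$ after descending along $\mathcal{R} \to \mathcal{R}/A$.

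Once $A$ is a subring-scheme-without-unit, i.e. an ideal, the identity section $1 \in \mathcal{R}(k)$ gives the needed contradiction: if $A \neq \Spec k$, consider the morphism $A \to \mathcal{R}$, $a \mapsto a \cdot 1 = a$, which is the inclusion, composed with nothing — instead consider that $\mathcal{R} \to \mathcal{R}$, $r \mapsto r \cdot 1 = r$ is the identity, and restrict the ring structure to $A$: the inclusion $A \hookrightarrow \mathcal{R}$ together with $m(A \times A) \subseteq A$ makes $A$ a (possibly non-unital) ring scheme. The key point is that $A$ anti-affine means $\mathcal{O}_A(A) = k$, so the ring of global functions has no room; alternatively, since $\mathcal{R}$ has a unit, the element $1$ lies in some affine open, but $\mathcal{R}$ connected anti-affine has no proper affine opens unless it is a point — here I would instead argue: $\mathcal{R}/A$ is an affine ring scheme (it inherits the ring structure, the quotient map being a ring homomorphism since $A$ is an ideal), and the composite $\Spec k \xrightarrow{1} \mathcal{R} \to \mathcal{R}/A$ is the unit of $\mathcal{R}/A$. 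If $A \neq \Spec k$ then $\mathcal{R} \to \mathcal{R}/A$ is not an isomorphism, but I claim $A$ cannot support a nonzero idempotent-free structure compatibly — the cleanest finish is: $A$, being anti-affine, admits only the trivial morphism to the affine scheme $\mathbb{G}_a$ underlying $\mathcal{R}$, yet the inclusion $A \hookrightarrow \mathcal{R}$ composed with any coordinate function is such a morphism, forcing every coordinate function to be constant on $A$, hence $A = \Spec k$.

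I expect the main obstacle to be the second step: verifying the irreducibility/local-noetherian hypotheses of Lemma 2.4 and correctly setting up the morphism so that the hypothesis $f(x, y_0) = f(x_0, y_0)$ holds — this requires knowing that multiplication by the zero section is zero and that $A$ is geometrically irreducible, which for a not-necessarily-reduced connected finite-type group scheme over a general field needs the standard fact that such group schemes are irreducible (connected $+$ finite type $+$ group $\Rightarrow$ irreducible over a field). The final contradiction via anti-affineness is then essentially formal once $A$ is shown to be an ideal sub-ring-scheme.
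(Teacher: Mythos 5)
Your overall strategy is the paper's: Brion's decomposition, Lemma 2.4 applied to the multiplication with $x_0$ and $y_0$ the additive identities, and the unit section to kill $\mathcal{R}^{ant}$. But there is a genuine gap in how you extract the conclusion from Lemma 2.4, and it is exactly this that makes your final step flounder. You apply the lemma to $f\colon A\times\mathcal{R}\to\mathcal{R}/A$ and conclude only the containment $m(A\times\mathcal{R})\subseteq A$, i.e.\ that $A$ is an ideal. That is strictly weaker than what the lemma gives and is not enough to finish: an ideal in a unital ring scheme need not be trivial, and none of your proposed conclusions actually closes the argument. In particular your ``cleanest finish'' is circular --- you speak of composing $A\hookrightarrow\mathcal{R}$ with ``coordinate functions'' on $\mathcal{R}$, but $\mathcal{R}$ is not yet known to be affine (that is the statement being proved), so it has no global coordinate functions separating points; all that anti-affineness of $A$ gives you for free is $\mathcal{O}_A(A)=k$, which does not by itself force $A=\Spec k$. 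The other finishes (the unit of $\mathcal{R}/A$, ``no room in the ring of global functions'') are left as assertions.

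The repair is small and is what the paper does: apply Lemma 2.4 to $m\colon A\times\mathcal{R}\to Z$ with $Z$ either $\mathcal{R}$ itself or $A$ (the latter once the ideal property is known), with $x_0=0_A$ and $y_0=0_{\mathcal{R}}$. Since $m(x,0)=0=m(0,0)$, the lemma's conclusion is the full identity $m(x,y)=m(x_0,y)=0\cdot y=0$ for \emph{all} $x,y$ --- not merely that the image lies in $A$. Evaluating at $y=1_{\mathcal{R}}$ and using the unit axiom $m(x,1)=x$ then gives $x=0$ for every point of $A$, i.e.\ $A=\Spec k$. Two smaller remarks: the irreducibility hypothesis of Lemma 2.4 is required of $Y=\mathcal{R}$ (connected finite-type group schemes over a field are irreducible), not of $X=A$, so your closing worry about geometric irreducibility of $A$ is misplaced; and the paper establishes that $A$ is an ideal not via the lemma but by noting that $\mathcal{R}^{aff}=\Spec\mathcal{O}_{\mathcal{R}}(\mathcal{R})$ inherits a ring scheme structure (using $\mathcal{O}_{X\times Y}(X\times Y)\cong\mathcal{O}_X(X)\otimes_k\mathcal{O}_Y(Y)$ for quasi-compact schemes), so that $A=\ker(\mathcal{R}\to\mathcal{R}^{aff})$ is an ideal --- though with the corrected application of the lemma this intermediate step is not strictly needed to conclude.
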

\begin{proof}
We know by Theorem \ref{Brionstructure} that $\mathcal{R}$ sits in the middle of an extension of an affine group scheme by an anti-affine group. Let
$$0 \rightarrow \mathcal{R}^{ant} \rightarrow \mathcal{R} \rightarrow \mathcal{R}^{aff} \rightarrow 0$$
be the corresponding extension where $\mathcal{R}^{ant}$ is anti-affine and $\mathcal{R}^{aff}$ the affine quotient. Since $\mathcal{R}$ is of finite type, $\mathcal{R}^{aff}$ is a ring scheme. This follows from the fact that if $X,Y$ are quasi-compact $k$-schemes, then the obvious map $\mathcal{O}_X(X) \otimes_k \mathcal{O}_Y(Y) \rightarrow \mathcal{O}_{X \times_k Y}(X \times_k Y)$ is an isomorphism (see \cite[Lemma 2.3.3.]{BrionStr}). This implies that $\mathcal{R}^{ant}$ defines an
ideal scheme in $\mathcal{R},$ i.e for all rings $S$ over $k, \mathcal{R}^{ant}(S)$ is an
ideal of $\mathcal{R}(S).$ Now, we will apply the above lemma with $Y=\mathcal{R}$ (note that $\mathcal{R}$ is irreducible) and
$X=Z=\mathcal{R}^{ant}.$ Taking $x_0=e_{\mathcal{R}^{ant}}$ and $y_0=e_{\mathcal{R}} $ to be the rational points corresponding to the additive identity of $\mathcal{R}^{ant}(k)$ and $\mathcal{R}(k)$ respectively, we have that
$m(x,y_0)=m(x_0,y_0)$ is identically equal to zero. Thus, we have that
$m(x,y)=m(x_0,y)$ is identically zero. But, letting $1_{\mathcal{R}}$ be the
rational point corresponding to the multiplicative identity of $\mathcal{R}(k)$ we
have that $m(x,1_{\mathcal{R}})$ is zero. But multiplication by $1$ is always
injective, and thus, $\mathcal{R}^{ant}$ is trivial and $\mathcal{R}$ is affine. \end{proof}
\noindent We don't know if the condition for $\mathcal{R}$ to be of finite type is necessary in \ref{ringaffine}. Let us recall the following definition from the theory of algebraic groups.
\begin{definition}
Let $G$ be a commutative group scheme over $k$. We say that $G$ is unipotent if it is affine and if every non-zero closed subgroup $H$ of $G$ admits a non-zero homomorphism $H \rightarrow \mathbb{G}_a.$
\end{definition}
\noindent The data of a homomorphism $G \rightarrow \mathbb{G}_a$ is the same as specifying an element $x \in A_G$ in the underlying Hopf algebra of $G$ that satisfies $\Delta_G(x) = x \otimes 1 + 1 \otimes x,$ i.e specifying a primitive element. If $G = \Spec A_G$ is an affine group scheme and $A_G$ the Hopf algebra associated to $G,$ then saying that $G$ is unipotent is the same as saying that it is coconnected (or conilpotent). The following definition will be useful for the proof of Theorem \ref{ringunipotent}. 
\begin{definition}
Let $G$ be a commutative affine group scheme over a field. We say that $G$ is multiplicative if every homomorphism $G \rightarrow \mathbb{G}_a$ is zero.
\end{definition}
\noindent An example of a multiplicative group is $\mathbb{G}_m = \Spec k[x,x^{-1}].$ There can in general be no homomorphism from a multiplicative group into a unipotent group and no morphisms from a unipotent group to a multiplicative group (for a proof, see \cite[Corollary 15.19-15.20]{MilneiAG}).\\ 
\noindent The following theorem was shown for reduced ring varieties over an algebraically closed fields by
Greenberg, but the results carry over for perfect fields without any modification. We improve on this by carrying through the proof when $\mathcal{R}$  is not necesarily reduced and over any field $k$. Further,the theorem can be extended to ring schemes not necesarily of finite type if
the ring scheme is already known to be affine. 
\begin{theorem} \label{ringunipotent}
Over a field $k,$ all connected ring schemes $\mathcal{R}$ of
finite type are unipotent.
\end{theorem}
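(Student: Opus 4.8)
The plan is to pass to the additive group of $\mathcal{R}$, reduce to an algebraically closed base field, and then play the multiplication of $\mathcal{R}$ off against the multiplicative-type part of its additive group. First I would invoke Theorem~\ref{ringaffine} to know that $\mathcal{R}$ is affine. Next, since a group scheme over $k$ is unipotent if and only if its base change to $\bar k$ is, and since the ring-scheme structure base-changes, I may assume $k=\bar k$; write $G:=(\mathcal{R},+)$, a connected commutative affine group scheme of finite type over $\bar k$. By the structure theory of commutative affine group schemes of finite type over a perfect field, $G$ decomposes as $G=M\times U$ with $M$ of multiplicative type, here diagonalizable, $M=D(\Lambda)$ for a finitely generated abelian group $\Lambda$, and $U$ unipotent (see \cite{DemazureG}; the statement is needed for possibly non-reduced groups, so that infinitesimal factors such as $\mu_p$ are included). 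Because $G$ is connected, $M$ is connected. Since for affine commutative group schemes of finite type over $\bar k$ being unipotent is equivalent to containing no nonzero subgroup of multiplicative type, the theorem reduces to the assertion that $M=0$.

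The first key step is that $M$ is an \emph{ideal} subscheme of $\mathcal{R}$: for every $k$-algebra $S$ and every $c\in\mathcal{R}(S)$, the map $m(-,c)$ is an endomorphism of the group scheme $G_S$, and its restriction to $M_S$ has image a quotient of the diagonalizable group $M_S$, hence diagonalizable; composing with the projection $G_S=M_S\times U_S\to U_S$ produces a homomorphism from a diagonalizable group scheme to a unipotent one, which vanishes — indeed $\underline{\Hom}_{gp}(\mathbb{G}_m,\mathbb{G}_a)$ and $\underline{\Hom}_{gp}(\mu_{p^e},\mathbb{G}_a)$ are trivial over an arbitrary base, as one checks by a direct computation with Laurent polynomials, and $M$, $U$ are assembled from such factors. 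Hence $m(M_S,c)\subseteq M_S$, so the multiplication restricts to a morphism $M\times\mathcal{R}\to M$ that is additive in the $M$-variable.

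The second key step is to classify this restricted multiplication. Being additive in the first variable, it is classified by a morphism of functors $\phi\colon\mathcal{R}\to\underline{\End}_{gp}(M)$. By rigidity of diagonalizable groups, $\underline{\End}_{gp}(M)$ takes the constant value $\End_{\ZZ}(\Lambda)$ on connected schemes (see \cite{DemazureG}), so $\underline{\End}_{gp}(M)$ is étale, and since $\mathcal{R}$ is connected the morphism $\phi$ is therefore ``constant'', that is, determined by its value at any $k$-point. Evaluating at the additive identity $0\in\mathcal{R}(k)$ gives, by bilinearity of $m$, the zero endomorphism $m(-,0)=0$; evaluating at the multiplicative identity $1_{\mathcal{R}}\in\mathcal{R}(k)$ gives $m(-,1_{\mathcal{R}})=\id_M$. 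Equality of these two forces $\id_M=0$, hence $\Lambda=0$ and $M=\Spec k$ is trivial. Therefore $G=(\mathcal{R},+)=U$ is unipotent, first over $\bar k$ and then, by descent, over $k$.

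The part requiring genuine input — rather than bookkeeping — is the pair of structural facts borrowed from the theory of group schemes: the product decomposition $G=M\times U$ over a perfect field for possibly non-reduced groups, and the rigidity statement that $\underline{\End}_{gp}(M)$ is étale (constant over $\bar k$). Once these are granted, the mechanism of the proof is driven entirely by connectedness of $\mathcal{R}$ together with the fact that the multiplication is bilinear with a two-sided unit. A minor point to be careful about is that the $\underline{\Hom}$-vanishing used to see that $M$ is an ideal is needed over general affine bases $S$ and not merely over fields; this causes no trouble because it follows from the explicit description of homomorphisms out of $\mathbb{G}_m$ and $\mu_{p^e}$.
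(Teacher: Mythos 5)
Your proof is correct and follows essentially the same strategy as the paper's: isolate the multiplicative part of the additive group, show it is an ideal under the ring multiplication, and use rigidity of multiplicative (diagonalizable) groups to conclude that the connected scheme $\mathcal{R}$ acts on it by a locally constant family of endomorphisms, so that comparing the values at $0$ and at $1_{\mathcal{R}}$ forces that part to vanish. The only cosmetic difference is that you base-change to $\bar k$ and invoke the splitting $G=M\times U$, whereas the paper works directly over $k$ with the greatest subgroup of multiplicative type (stable under all endomorphisms, per Milne) and cites the corresponding rigidity statement there.
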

\begin{proof}
By Theorem \ref{ringaffine}, a connected ring scheme is affine. We know that $\mathcal{R}$ contains a greatest multiplicative subgroup $\mathcal{R}_m$ that has the property that for all endomorphisms $\alpha$ of $\mathcal{R}_S,$ (where $\mathcal{R}_S$ is the base change of $\mathcal{R}$ to $S$) for $S$ a $k$-algebra, that $\alpha((\mathcal{R}_m)_S) \subset (\mathcal{R}_m)_S$ (\cite[Theorem 17.16]{MilneiAG}). Thus, since any $x \in \mathcal{R}(k)$ defines an endomorphism of $\mathcal{R}$ (as a group scheme) through multiplication by $x,$ we have that $\mathcal{R}_m$ is an ideal of $\mathcal{R}.$ It is known that any action of a connected algebraic group on a multiplicative group must be trivial, i.e for $G$ connected and $H$ multiplicative, a map $G \rightarrow \Aut(H,H)$ must have image the identity. We will need the following, which says that any map $G \rightarrow \End(H,H)$ where $G$ is any connected group scheme and $H$ is multiplicative is trivial. This is basically just deduced, mutatis mutandis, from the proof of \cite[Theorem 14.28]{MilneiAG}.  So, we see that $0$ and $1$ defines the same endomorphisms on the ideal scheme $\mathcal{R}_m.$ But this is only the case if $\mathcal{R}_m=0.$  \end{proof}
\noindent To extend this to all connected ring schemes, we need the following:
\begin{theorem} \label{ringfiltered}
Let $k$ be a field and $\mathcal{R}$ be an affine ring scheme over $k.$ Then $\mathcal{R}$
is a filtered limit of ring schemes of finite type.
\end{theorem}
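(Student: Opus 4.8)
The plan is to realise the coordinate ring $A:=\mathcal{O}_{\mathcal{R}}(\mathcal{R})$ as the directed union of its $k$-subalgebras that are finitely generated over $k$ and stable under all the structure maps of the ring scheme; applying $\Spec$ then exhibits $\mathcal{R}$ as the cofiltered limit, in ring schemes, of the corresponding finite-type ring schemes. The ring-scheme structure equips $A$ with $k$-algebra homomorphisms $\Delta^+,\Delta^\times\colon A\to A\otimes_k A$ and $\epsilon^+,\epsilon^\times\colon A\to k$ together with the involutive $k$-algebra automorphism $\sigma\colon A\to A$ co-morphic to $a\mapsto -a$, subject to the co-ring axioms; in particular $(A,m_A,1_A,\Delta^+,\epsilon^+,\sigma)$ is the commutative Hopf algebra of the additive group scheme $(\mathcal{R},+)$, so that $\Delta^+$ is cocommutative and $\sigma^2=\id$, while $(A,\Delta^\times,\epsilon^\times)$ is a cocommutative coassociative counital $k$-coalgebra, the two being linked by the co-forms of distributivity, of $0\cdot a=0$ and of $(-a)\cdot b=-(a\cdot b)$; the first reads $(\id_A\otimes\Delta^+)\circ\Delta^\times=m^{(1,3)}\circ(\Delta^\times\otimes\Delta^\times)\circ\Delta^+$, where $m^{(1,3)}\colon A^{\otimes 4}\to A^{\otimes 3}$ multiplies the first and third tensor factors, and the third reads $(\sigma\otimes\id_A)\circ\Delta^\times=\Delta^\times\circ\sigma$. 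Since $k$ is a field, $B\otimes_k B\hookrightarrow A\otimes_k A$ for every $k$-subalgebra $B$, so a finitely generated $k$-subalgebra stable under $\Delta^+,\Delta^\times,\sigma$ automatically inherits the co-ring axioms and is the coordinate ring of a finite-type ring scheme; and such subalgebras form a directed family, as the subalgebra generated by two of them is again finitely generated and stable. Hence the theorem reduces to the statement that every $x\in A$ lies in a finitely generated $k$-subalgebra stable under $\Delta^+,\Delta^\times$ and $\sigma$.

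First I would apply the fundamental theorem of coalgebras to the cocommutative coalgebra $(A,\Delta^\times,\epsilon^\times)$: $x$ and $1_A$ lie in a finite-dimensional $\Delta^\times$-subcoalgebra $C\subseteq A$. A useful general principle is that $k[V]$ is stable under $\Delta^\times$ whenever $V$ is a $\Delta^\times$-subcoalgebra of $A$, because $\Delta^\times$ is an algebra homomorphism and $\Delta^\times(k[V])\subseteq k[V\otimes V]\subseteq k[V]\otimes k[V]$; in particular $k[C]$ is already a finitely generated sub-bialgebra of $(A,\Delta^\times)$ containing $x$, so the task is to enlarge it to be stable also under $\Delta^+$ and $\sigma$. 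Replacing $C$ by the $\Delta^+$-subcoalgebra $D$ it generates --- still finite-dimensional, by the fundamental theorem of coalgebras applied to a basis of $C$ --- and setting $B:=k[D\cup\sigma(D)]$, one gets a finitely generated $k$-subalgebra containing $x$ that is stable under $\Delta^+$ (since $\Delta^+(D)\subseteq D\otimes D$ by construction and $\Delta^+(\sigma D)=(\sigma\otimes\sigma)\Delta^+(D)\subseteq\sigma D\otimes\sigma D$ by cocommutativity) and under $\sigma$ (since $\sigma$ is an algebra automorphism with $\sigma^2=\id$, so $\sigma(B)=k[\sigma D\cup D]=B$).

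The remaining point --- and, I expect, the main obstacle --- is that $B$ is stable under $\Delta^\times$. As $\Delta^\times$ is an algebra homomorphism it suffices to prove $\Delta^\times(D)\subseteq B\otimes B$, and then $\Delta^\times(\sigma D)\subseteq B\otimes B$ follows from $(\sigma\otimes\id_A)\circ\Delta^\times=\Delta^\times\circ\sigma$ together with $\sigma(B)=B$. We already know $\Delta^\times(C)\subseteq C\otimes C\subseteq B\otimes B$; the content is to propagate $\Delta^\times$-stability from the $\Delta^\times$-subcoalgebra $C$ to the $\Delta^+$-subcoalgebra $D$ it generates, and here the co-form of distributivity is exactly what is available: applied to an element of $D$ it rewrites the $\Delta^+$-coproduct of its $\Delta^\times$-matrix coefficients in terms of $\Delta^\times$ and $\Delta^+$ of elements already in $D$, so that, choosing minimal (linearly independent) representations of the relevant $\Delta^\times$-coproducts and comparing tensor components, the $\Delta^\times$-coproducts of elements of $D$ are forced to lie in $B\otimes B$. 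If one pass does not suffice, one alternates the two enlargements --- pass to the $\Delta^+$-subcoalgebra generated, then adjoin the span of the new $\Delta^\times$-matrix coefficients --- and must show this ascending chain of finite-dimensional subspaces stabilises, equivalently that the smallest $k$-subalgebra of $A$ containing $x$ and stable under all the structure maps is finitely generated. When $(A,\Delta^+)$ is connected this can be done explicitly along the coradical filtration: the base case is that $\Delta^\times$ sends primitive elements to sums of tensors of primitive elements (which drops out of the co-form of distributivity applied to a primitive, using linear independence), and one then induces on the filtration degree, invoking the fundamental theorem of coalgebras at each step for the finite-dimensional piece involved; the general case is reduced to the connected one by standard devices --- the connected--\'etale sequence $0\to\mathcal{R}^\circ\to\mathcal{R}\to\pi_0(\mathcal{R})\to 0$, base change to the algebraic (or perfect) closure, and faithfully flat descent. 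Granting this, $B$ is the desired finitely generated sub-co-ring object of $A$ containing $x$, and the theorem follows.
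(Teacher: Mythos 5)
Your global framework --- realising $A=\mathcal{O}_{\mathcal{R}}(\mathcal{R})$ as the directed union of finitely generated subalgebras stable under $\Delta^+$, $\Delta^\times$ and the antipode, and reducing to the claim that every element lies in one such --- is exactly the paper's, as is the intended toolkit (fundamental theorem of coalgebras plus the co-form of distributivity). The gap is in the order in which you apply the two coalgebra structures, and it is not cosmetic. You first place $x$ in a finite-dimensional $\Delta^\times$-subcoalgebra $C$, then pass to the $\Delta^+$-subcoalgebra $D$ generated by $C$, and are left having to prove $\Delta^\times(D)\subseteq B\otimes B$, i.e.\ to control $\Delta^\times$ on the $\Delta^+$-matrix coefficients of elements of $C$. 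But codistributivity only goes one way: it expresses $(1\otimes\Delta^+)\circ\Delta^\times$ in terms of $\Delta^+$ followed by $\Delta^\times$'s and multiplications (because multiplication distributes over addition), and there is no dual identity computing $(1\otimes\Delta^\times)\circ\Delta^+$ (addition does not distribute over multiplication). Your proposed use of the identity ``applied to an element of $D$'' is therefore circular: its right-hand side, $m^{(1,3)}\circ(\Delta^\times\otimes\Delta^\times)\circ\Delta^+(d)$ in your notation, already involves $\Delta^\times$ of elements of $D$, which is precisely the unknown, and no amount of comparing tensor components extracts $\Delta^\times(d)$ from sums of \emph{products} of its would-be components. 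You half-acknowledge this by falling back on an alternating enlargement whose stabilisation you do not prove; the coradical-filtration induction and the reduction of the disconnected case by descent are sketches of a genuinely harder argument, not a proof.

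The fix is to reverse the order, which is what the paper does and which makes a single pass suffice. Write $\Delta^+_2(a)=\sum_{i,j}c_i\otimes x_{ij}\otimes d_j$ with the $c_i$ and $d_j$ linearly independent, so that the span $X$ of the $x_{ij}$ is a finite-dimensional $\Delta^+$-subcoalgebra containing $a$; then let $Y\supseteq X$ be the span of the $\Delta^\times$-matrix coefficients $y_{kl}$ obtained from $\Delta^\times_2(x_{ij})=\sum_{k,l}e_k\otimes y_{kl}\otimes f_l$, so that $\Delta^+(X)\subseteq X\otimes X$ and $\Delta^\times(Y)\subseteq Y\otimes Y$ hold by construction. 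With $Z=k[X\cup Y]$ the only nontrivial containment is $\Delta^+(y_{kl})\in Z\otimes Z$, and this is exactly what codistributivity delivers in the direction in which it actually holds: one side of the identity is $(1\otimes\Delta^+\otimes 1)\Delta^\times_2(x_{ij})=\sum_{k,l}e_k\otimes\Delta^+(y_{kl})\otimes f_l$, while the other side is computed by first applying $\Delta^+$ to $x_{ij}$ (landing in $X\otimes X$), then $\Delta^\times$'s (landing in tensor powers of $Y$), then multiplications (landing in $Z^{\otimes 4}$); linear independence of the $e_k$ and the $f_l$ then forces $\Delta^+(y_{kl})\in Z\otimes Z$. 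Adjoining $S(Z)$ handles the antipode exactly as you describe. With this reordering your write-up closes up into a complete proof; as it stands, the key step is missing.
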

\begin{proof}
The following proof is inspired by the analogue theorem for Hopf algebras over a field, as occurs in for example Milne \cite[Prop. 11.32]{MilneiAG}. Write $\mathcal{R} = \Spec A_\mathcal{R}.$ We know that $A_\mathcal{R}$ is a bialgebra and we see that
we can reduce to proving that any $a \in A_\mathcal{R}$ is contained in a
sub-bialgebra of finite type. Let $\Delta^+:A_\mathcal{R} \rightarrow A_\mathcal{R} \otimes
A_\mathcal{R}$ be the coaddition giving the additive group structure on $\mathcal{R}$ and
$\Delta^\times: A_\mathcal{R} \rightarrow A_\mathcal{R} \otimes A_\mathcal{R}$ the comultiplication
defining the multiplication on $\mathcal{R}.$ Consider $$ \Delta^+_2(a)= \sum_{i,j}
c_i \otimes x_{ij} \otimes d_j $$ with $c_i$ and $d_j$ linearly
independent. Now, by the fundamental theorem of coalgebras, we know that
if we take $X$ to be the subspace of $A_\mathcal{R}$ generated by $\{x_{ij}\},$
then this is a subcoalgebra, i.e that $\Delta^+(x_{ij}) \subset X
\otimes X.$ Now, for each $x_{ij}$ in this system, consider
$$\Delta^{\times}_2(x_{ij}) = \sum_{k,l} e_i \otimes y_{kl} \otimes f_l$$
with $e_i$ and $f_l$ linearly independent. With the same arguments, one
sees that for the subspace $Y$ generated by $\{y_{kl}\}$ we have
$\Delta^{\times}(y_{kl}) \subset Y \otimes Y.$ Let now $Z$ be subalgebra generated by the finite-dimensional subspace spanned by
 $\{x_{ij}, y_{kl}\}.$ We claim 
that $Z$ actually is closed under both the operation $\Delta^+$ and
$\Delta^{\times}.$ It is clear that $$\Delta^{\times}(x_{ij}) \subset Z
\otimes Z$$ and the same holds for coaddition. It is also easy to verify that $\Delta^\times(y_{kl}) \subset Z \otimes Z.$ We will now prove that
$\Delta^{+}(y_{kl}) \subset Z \otimes Z$ and for this, consider the
following diagram which is easily verified if we reverse all arrows and think of it in terms of rings.  \\
\begin{tikzcd}A_\mathcal{R} \arrow[r, "\Delta^\times"] \arrow[d, "\Delta^+"]
& A_\mathcal{R} \otimes A_\mathcal{R} \arrow[d, equals] \\
A_\mathcal{R} \otimes A_\mathcal{R} \arrow[d,"\Delta^\times \otimes \Delta^\times"]
& A_\mathcal{R} \otimes A_\mathcal{R}  \arrow[d,equals]\\
A_\mathcal{R} \otimes A_\mathcal{R} \otimes A_\mathcal{R} \otimes A_\mathcal{R} \arrow[d,"1 \otimes T \otimes 1"] & 
A_\mathcal{R} \otimes A_\mathcal{R} \arrow[d,equals] \\
A_\mathcal{R} \otimes A_\mathcal{R} \otimes A_\mathcal{R} \otimes A_\mathcal{R} \arrow[d,"\Delta^\times \otimes 1 \otimes 1 \otimes \Delta^\times"] & 
A_\mathcal{R} \otimes A_\mathcal{R} \arrow[d,equals] \\ 
A_\mathcal{R} \otimes (A_\mathcal{R} \otimes A_\mathcal{R}) \otimes (A_\mathcal{R} \otimes A_\mathcal{R}) \otimes A _\mathcal{R} \arrow[d,"1 \otimes T \otimes T \otimes 1"] & A_\mathcal{R} \otimes A_\mathcal{R}  \arrow[d,"\Delta^\times \otimes 1"] \\
(A_\mathcal{R} \otimes A_\mathcal{R}) \otimes A_\mathcal{R} \otimes A_\mathcal{R} \otimes (A_\mathcal{R} \otimes A_\mathcal{R}) \arrow[d, "M \otimes 1 \otimes 1 \otimes M"] & 
(A_\mathcal{R} \otimes A_\mathcal{R}) \otimes A_\mathcal{R} \arrow[d, "1 \otimes \Delta^+ \otimes 1"] \\
A_\mathcal{R} \otimes A_\mathcal{R} \otimes A_\mathcal{R} \otimes A_\mathcal{R} \arrow[r,equals] & A_\mathcal{R} \otimes A_\mathcal{R} \otimes A_\mathcal{R} \otimes A 
\end{tikzcd} 
\\ \\ \noindent
Here $M$ is the multiplication map and $T$ swithces the factors. What the diagram is saying, is just relating different ways of forming $$abd+acd$$ for $a,b,c,d$ in a ring. So this says, that  $$(1 \otimes \Delta^+ \otimes 1)(\Delta^{\times}_2(x_{ij}) = \sum_{k,l} e_k \otimes \Delta^+(y_{kl}) \otimes f_l  \in Z \otimes Z \otimes Z \otimes Z.$$
Now, since $e_k$ are independent, this means that $$\sum_l
\Delta^+(y_{kl}) \otimes f_l \in Z \otimes Z \otimes Z$$ and by linear independence
of each $f_l$ this means that $$\Delta^+(y_{kl}) \in Z \otimes Z.$$ Now, let $W$ be
the sub-algebra generated by $Z \cup S(Z)$ where $S:A_\mathcal{R} \rightarrow A_\mathcal{R}$ is
the antipode. It is easily verified that $$\Delta^+ \circ S = (S \otimes
S ) \circ \Delta^+$$ and that $$\Delta^{\times}(S(Z)) \subset W$$ 
follows from the identity $$\Delta^{\times} \circ S = (1
\otimes S) \circ \Delta^{\times}.$$ We thus see that $W$ is a bialgebra
and we are done. \end{proof}
\begin{corollary}
Any affine connected ring scheme over a field is unipotent.
\end{corollary}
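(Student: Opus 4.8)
The plan is to reduce to the finite-type case, Theorem~\ref{ringunipotent}, by means of the approximation result Theorem~\ref{ringfiltered}, the bridge being the faithful flatness of a commutative Hopf algebra over a Hopf subalgebra. Write $\mathcal{R} = \Spec A_\mathcal{R}$ with $A_\mathcal{R}$ its (cocommutative) bialgebra of functions. By the proof of Theorem~\ref{ringfiltered}, $A_\mathcal{R}$ is the directed union of a family of finite-type sub-birings $A_i$; each such $A_i$ is in particular a Hopf subalgebra of $(A_\mathcal{R},\Delta^+,S)$ for the additive structure, so $\mathcal{R} = \varprojlim_i \mathcal{R}_i$ where $\mathcal{R}_i := \Spec A_i$ is a ring scheme of finite type over $k$ and the projections $\pi_i\colon \mathcal{R}\to\mathcal{R}_i$ are homomorphisms of ring schemes.

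Next I would argue that every $\mathcal{R}_i$ is connected. Since $A_i\hookrightarrow A_\mathcal{R}$ is an inclusion of Hopf algebras over a field and a commutative Hopf algebra is faithfully flat over every Hopf subalgebra (\cite{MilneiAG}), the morphism $\pi_i$ is faithfully flat, hence surjective on underlying spaces; being a continuous image of the connected space $\mathcal{R}$, it follows that $\mathcal{R}_i$ is connected. Now Theorem~\ref{ringunipotent} applies to the connected finite-type ring scheme $\mathcal{R}_i$ and shows it is unipotent, i.e. $A_i$ is coconnected (its coradical is $k\cdot 1$).

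Finally, coconnectedness is inherited by the directed union $A_\mathcal{R}=\bigcup_i A_i$: any simple subcoalgebra of $A_\mathcal{R}$ is finite dimensional over $k$ and therefore, by directedness, lies inside some $A_i$, where it must equal $k\cdot 1$; hence the coradical of $A_\mathcal{R}$ is $k\cdot 1$, so $A_\mathcal{R}$ is coconnected and $\mathcal{R}$ is unipotent. (Equivalently, for each $a\in A_\mathcal{R}$ the iterated reduced coaddition $\overline{\Delta^+}^{\,(n)}(a)$ is computed inside some $A_i$ and so vanishes for $n$ large.) Everything here is formal once the finite-type approximants are known to be connected, so the step I expect to need the most care is precisely the surjectivity of $\pi_i$, which rests on the Hopf-algebraic faithful flatness input; the ring-scheme content proper is entirely hidden inside the already-proved Theorem~\ref{ringunipotent}.
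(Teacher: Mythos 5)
Your proof takes exactly the same route as the paper's: write $\mathcal{R}=\varprojlim_i \mathcal{R}_i$ as a filtered limit of finite-type ring schemes via Theorem~\ref{ringfiltered}, apply Theorem~\ref{ringunipotent} to each $\mathcal{R}_i$, and pass unipotence (coconnectedness of the Hopf algebra) up the directed union. Your version is in fact more careful than the paper's one-line argument, since you explicitly verify the connectedness of each finite-type approximant $\mathcal{R}_i$ (via faithful flatness of $A_\mathcal{R}$ over the Hopf subalgebra $A_i$, hence surjectivity of $\pi_i$) --- a hypothesis that Theorem~\ref{ringunipotent} requires and that the paper's proof leaves implicit.
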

\begin{proof}
Indeed, we know that we can write $P = \varprojlim P_i $ where $P_i$ ranges over ring schemes of finite type. Now, unipotence is stable under inverse limits and this immediately gives that $P$ is unipotent. 
\end{proof}
\section{Plethories and $k-k$-birings.}

\noindent Let $k$ be an arbitrary commutative ring. In this section we will recall
the definition of a plethory as defined in \cite{BW}.

\begin{definition}

A $k$-biring $A$ is a coring object in the category of $k$-algebras.
Explicitly, $A$ is a $k$-algebra together with maps $$\Delta^+: A
\rightarrow A \otimes_k A,$$ $$\Delta^\times : A \rightarrow A \otimes_k
A,$$ $$S:A \rightarrow A,$$ $$\epsilon^+:A \rightarrow k$$and
$\epsilon^\times:A \rightarrow k$ such that: \\ \begin{itemize}

\item The triple $(\Delta^+,\epsilon^+,S)$ defines a cocommutative Hopf
algebra structure on $A$ with $S$ the antipode and $\epsilon^+$ the counit.

\item $\Delta^\times$ is cocommutative coassociative and codistributes
over $\Delta^+$ and $\epsilon^\times:A \rightarrow k$ is a counit for
$\Delta^\times.$

\end{itemize} 
\noindent We say that $A$ is a $k-k$-biring if, in addition to the above data, it
has a map $$\beta:k \rightarrow \Ringk(A,k)$$ of rings, where we endow
$\Ringk(A,k)$ with the ring structure induced from the coring structure
on $A.$
\end{definition}

\noindent Equivalently, a $k-k$-biring $A$ is just an affine scheme together with a lift of
the functor $\Ringk(A,-)$ to the category of $k$-algebras, i.e it is an affine
$k$-algebra scheme. 
\begin{example}
\noindent Let define the $k$-algebra scheme which we will call $\mathbb{G}_a.$ $\mathbb{G}_a$ will represent the identity functor $\Ringk \rightarrow \Ringk.$ The underlying scheme of $\mathbb{G}_a$ is  $\Spec k[e].$  The coaddition and comultiplication is given by $\Delta^+(e) = e
\otimes 1 + e \otimes 1,$ $\Delta^\times(e) = e \otimes e,$ the additive resp. multiplicative counit by 
$\epsilon^+(e)=0,\epsilon^\times(e)=1$ the antipode by $S(e)=-e$ and the co-$k$-linear structure by $\beta(c)(e) = c$ for all
$c \in k.$
\end{example}

\begin{example}
Consider $\mathbb{Z}[e,x].$ On $e,$ we define all the structure
maps as in the previous example. We then define $$\Delta^+(x) =
x \otimes 1+ 1 \otimes x,$$ $$\Delta^\times(x) =
x \otimes e +e \otimes x$$ and
$\epsilon^\times(x)=\epsilon^+(x)=0,$ $S(x) = -
x.$ This $\mathbb{Z}$-ring scheme represents the functor taking a
ring $R$ to $R[\epsilon]/(\epsilon^2)$ , the ring of dual numbers
over that ring.
\end{example}

\begin{example}
Let $k = \mathbb{F}_q$ be a finite field of characteristic $p$ and
consider $$\alpha_p = \Spec k[e]/(e^p)$$ as a group scheme where the group
structure is induced from $\Spec k[e].$ Define a multiplication $$\alpha_p
\times \alpha_p \rightarrow \alpha_p$$ by saying that $xy=0$ for any $x,y \in
\alpha_p(R)$ for $R$ a $k$-algebra. Consider now the constant group scheme
$$\underline{\mathbb{F}_q}= \coprod_{x \in k} k.$$ Then we can define a
structure of a ring scheme on $\underline{\mathbb{F}_q} \times \alpha_p$ by
defining the multiplication to be $(x,y)(z,w) = (xz,xw+yz)$ for
$(x,y),(z,w) \in (\underline{\mathbb{F}_q} \times \alpha_p) (R).$ This is a
non-reduced ring scheme.
\end{example}
\noindent  A famous example is also that the functor taking a ring $R$ to $W(R),$
its ring of big Witt vectors, is also representable by a ring scheme.\\
\noindent Let us note that we can form the category of $k-k$-birings, with morphisms between objects those morphism of $k$-algebras respecting the biring structure. We let $BR_{k,k}$ be the category of $k-k$-birings.
Let us recall the following definition from \cite{BW}.
\begin{definition}
Let $A$ be a $k-k$-biring. Then the functor $$\Ringk(A,-):\Alg \rightarrow
\Alg$$ has a left adjoint, $$A \odot_k -:\Alg \rightarrow \Alg.$$
Explicitly, for a $k$-algebra $R,$ $A \odot R$ is the $k$-algebra
generated by all symbols $a \odot r$ subject to the conditions that: \\
\begin{itemize}

\item $\forall a,a' \in A,$ $r \in R,$ $aa' \odot r = (a \odot r )(a'
\odot r).$

\item $\forall a,a' \in A,$ $r \in R,$ $(a+a') \odot r = (a \odot r) +
(a' \odot r).$

\item $\forall c \in k,$ $c \odot r = c.$

\item $\forall a \in A,$ $r,r' \in R,$ $a \odot (r+r') = \sum_i
(a_i^{(1)} \odot r)(a_i^{(2)} \odot r').$

\item $\forall a \in A,$ $r,r' \in R,$ $a \odot rr' = \sum_i (a_i^{[1]}
\odot r)(a_i^{[2]} \odot r').$

\item $\forall a \in A,$ $c \in k,$ $a \odot c = \beta(c)(a).$

\end{itemize}

\end{definition}

\noindent It is easy to see that $(\otimes_i A_i) \odot R \cong \otimes_i (A_i
\odot R)$ and that $A \odot (\otimes_i R_i ) \cong \otimes_i (A \odot
R_i).$ If further, $R$ is a $k-k$-bialgebra, we note that $A \odot R$ is
a $k-k$-bialgebra. Indeed, we have that $\Ringk(A \odot R,S) \cong
\Ringk(R,\Ringk(A,S))$ and since the latter set has a ring structure, so
does the former. One then verifies that $\odot_k$ gives a monoidal
structure to $BR_{k,k}.$ The unit of this monoidal structure is $k[e].$
$BR_{k,k}$ is a monoidal category, but it is not symmetric. Now, the Yoneda embedding sets up an equivalence of categories between the category of representable endofunctors $\Algk \rightarrow 
\Algk$ and $BR_{k,k}$ and under this equivalence, $\odot$ corresponds to $\circ,$ composition of representable endofunctors as given in the introduction. Denote the category of representable endofunctors $\Alg_k \rightarrow \Algk$ by $\Algkend.$
\begin{definition}
A $k$-plethory is a comonoid in $\Algkend$ where the monoidal structure is composition of endofunctors. Explicitly, on the level of representing objects, a $k$-plethory $P$ is a monoid in $BR_{k,k}.$ This means that $P$ is a
biring together with an associative map of birings $P \odot P
\rightarrow P$ and a unit $k[e] \rightarrow P.$

\end{definition}

\begin{remark} \label{prings}

For a plethory $P$ one can define an action of $P$ on a $k$-ring $R$ to
be a map $\circ: P \odot R \rightarrow R$ such that $(p_1 \odot p_2)
\circ r = p_1 \odot (p_2 \circ r)$ and $e \circ r = r, \forall p_1,p_2
\in P, r\in R.$ A ring $R$ together with an action of $P$ on $R$ is called a $P$-ring.

\end{remark}

\begin{example}

If $k$ is a finite ring, then $k^k,$ the set of functions $k \rightarrow
k$ is a plethory where $\circ$ is given by composition of functions.

\end{example}
\section{Classification of plethories over a field of characteristic zero.}

\noindent In this section we will prove that all plethories over a field of characteristic zero are linear. This question was asked by Bergman-Hausknecht \cite{BergmanHaus} and Borger-Wieland \cite{BW}.  To understand what it means for a plethory to be linear, we will introduce some terminology.
\begin{definition}
Let $A$ be a cocommutative bialgebra (not necesarily commutative) over $k$ with comultiplication $\Delta.$  Then there is a free $k$-plethory on $A$ over $k.$ The underlying algebra structure is $\Sym(A),$ the symmetric algebra on $A,$ and the coaddition $$\Delta^+: \Sym(A) \rightarrow \Sym(A) \otimes \Sym(A)$$ is induced from the map $$A \rightarrow \Sym(A) \otimes \Sym(A)$$ sending $a$ to $a \otimes 1 + 1 \otimes a.$ The comultiplication $\Delta^\times$ is similarily induced from $\Delta.$ The plethysm $$\circ: \Sym(A) \odot \Sym(A) \rightarrow \Sym(A)$$ is given by $$\Sym(A) \odot \Sym(A) \cong \Sym(A \otimes A) \xrightarrow{\Sym(m)} \Sym(A)$$ where $m$ is the multiplication on $A.$ Among the pairs consisting of a plethory $P$ and a morphism of bialgebras $f:A \rightarrow P$ the pair $\Sym(A)$ and $j:A \rightarrow \Sym(A)$ is initial with this property.
\end{definition}
\noindent Call a plethory $P$ linear if  $P \cong \Sym(A)$ for some bialgebra $A.$ The reason for calling it linear is that if $P \cong \Sym(A)$ for some bialgebra $A$ then $$\Ringk(\Sym(A),-) = \Modk(A,-).$$
Let us note now that by Theorem \ref{ringunipotent} any connected ring scheme of finite type is unipotent. Over $\mathbb{Q}$ (or more generally any field of characteristic zero) all group schemes are reduced by a theorem of Cartier. We say that a group scheme $G$ is étale if $G$ is a finite scheme and  geometrically reduced. This is equivalent to asking for the underlying Hopf algebra $A_G$ to be an étale algebra. Let us recall the following definition from the theory of group schemes (see for example \cite[II, $\mathsection 5,$ Proposition 1.8]{DemazureG} or \cite[Definition 9.4]{MilneiAG})
\begin{definition}
Let $G$ be a group scheme of finite type over $k.$ Let $A_G$ be the underlying Hopf algebra of $G$ and consider the largest étale k-subalgebra $\pi_0(A_G)$ of $A_G.$ $\pi_0(A_G)$ then has a Hopf algebra structure induced from the one on $A_G$ and we let $\pi_0(G) = \Spec \pi_0(A_G)$ be the group scheme associated to this Hopf algebra. 
\end{definition}
\noindent Note that there is a canonical map $G \rightarrow \pi_0(G).$ It is easy to see that if $\pi_0(G)= \Spec k,$ then $G$ is geometrically connected since in that case $A_G$ has no nontrivial idempotents.
\begin{lemma}
Any $k$-algebra scheme $\mathcal{R}$ of finite type over any infinite field $k$ is geometrically connected. 
\end{lemma}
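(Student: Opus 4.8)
The plan is to show that $\mathcal{R}$ has no nontrivial idempotents in its coordinate ring, equivalently that $\pi_0(\mathcal{R}) = \Spec k$, using the algebra structure in an essential way. The key observation is that a $k$-algebra scheme has two distinguished $k$-rational points, the additive identity $0$ and the multiplicative identity $1$, and multiplication by a scalar gives endomorphisms interpolating between them. More precisely, for each $c \in k$ the scalar homothety $m_c \colon \mathcal{R} \to \mathcal{R}$, $x \mapsto cx$, is an endomorphism of the underlying scheme (indeed of the additive group scheme) fixing $0$, with $m_0$ the constant map at $0$ and $m_1 = \id$. I would first reduce to the connectedness of $\pi_0(\mathcal{R})$: since $\pi_0$ is functorial, each $m_c$ induces an endomorphism of the finite étale group scheme $\pi_0(\mathcal{R})$, and $\pi_0$ of multiplication gives $\pi_0(\mathcal{R})$ the structure of a finite étale $k$-algebra scheme. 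It therefore suffices to prove the lemma for a \emph{finite} étale $k$-algebra scheme, i.e. to show that a finite étale $k$-ring scheme over an infinite field is trivial (a single point).

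Next I would analyze the finite étale case. After base change to $\bar k$ a finite étale $k$-algebra scheme becomes a constant ring scheme $\underline{R}$ on a finite ring $R$, with $\operatorname{Gal}(\bar k/k)$ acting by ring automorphisms, so that $\mathcal{R} = \underline{R}$ as a Galois form; the $k$-points $0, 1 \in \mathcal{R}(k) \subseteq R$ are Galois-fixed. The homotheties $m_c$ for $c \in k$ now act on the finite set $R$. Here is where infinitude of $k$ enters: I want to argue that the family $\{m_c\}_{c \in k}$ of endomorphisms of the finite scheme $\underline R$ — equivalently, the resulting family of ring endomorphisms $R \to R$ — cannot be as rich as scalar multiplication on a nontrivial finite ring unless $R = 0$ or $R$ is forced to be trivial. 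Concretely, on $\pi_0$ the map $c \mapsto m_c$ is a map from the infinite set $k$ into the finite monoid $\End(\pi_0(\mathcal{R}))$, so two distinct scalars $c \neq c'$ induce the same endomorphism of $\pi_0(\mathcal{R})$; subtracting, some nonzero scalar $c - c'$ induces on $\pi_0(\mathcal{R})$ the \emph{same} endomorphism as $0$, namely the constant map at the image of $0$. But on the other hand, for the invertible scalar $c-c'$ the homothety $m_{c-c'}$ is an automorphism of the additive group scheme $\mathcal{R}$, hence induces an automorphism of $\pi_0(\mathcal{R})$. An automorphism that equals a constant map forces $\pi_0(\mathcal{R})$ to be a single point, i.e. $\pi_0(A_\mathcal{R}) = k$, so $A_\mathcal{R}$ has no nontrivial idempotents and $\mathcal{R}$ is geometrically connected.

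I expect the main obstacle to be the bookkeeping in the reduction step — checking carefully that $\pi_0$ sends a $k$-algebra scheme to a $k$-algebra scheme (that $\pi_0$ is monoidal enough to transport the ring-scheme structure maps, using that $\pi_0(A_\mathcal{R} \otimes_k A_\mathcal{R}) = \pi_0(A_\mathcal{R}) \otimes_k \pi_0(A_\mathcal{R})$ for finite-type $k$-algebras over a field), and that the homotheties $m_c$ are genuinely morphisms of schemes over $k$ compatible with $\pi_0$. Once that is in place, the contradiction argument is short: the point is simply that $k$ infinite forces two distinct scalars to collapse on the finite object $\pi_0(\mathcal{R})$ while their difference remains a unit, and a unit homothety is an automorphism, which cannot be constant unless the object is trivial. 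An alternative, perhaps cleaner, route to the same conclusion: work directly with idempotents $f \in A_\mathcal{R}$, note that $c \mapsto (m_c)^*(f)$ is a map from $k$ into the finite set of idempotents of $A_\mathcal{R}$, so it is constant on an infinite subset; pin down its value at $c = 0$ and $c = 1$ and derive $f \in k$. I would present whichever of these is shorter in the final write-up, but the homothety-on-$\pi_0$ version is the one I would attempt first.
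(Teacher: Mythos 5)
Your argument is correct, and its first half --- passing to the connected--\'etale sequence and showing that $\pi_0(\mathcal{R})$ inherits the structure of a finite \'etale $k$-algebra scheme --- is exactly the reduction the paper performs (the paper checks that $\mathcal{R}^\circ$ is an ideal subscheme rather than invoking $\pi_0(X\times Y)\cong\pi_0(X)\times\pi_0(Y)$, but these come to the same thing). Where you genuinely diverge is the punchline. The paper observes that $\Schk(\pi_0(\mathcal{R}),\pi_0(\mathcal{R}))=\pi_0(\mathcal{R})(\pi_0(\mathcal{R}))$ is simultaneously a finite set and a unital $k$-algebra, and a nonzero unital algebra over an infinite field is infinite; hence the Hom-set is a singleton and $\pi_0(\mathcal{R})=\Spec k$. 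You instead pigeonhole the homotheties $m_c$ into the finite monoid $\End(\pi_0(\mathcal{R}),\pi_0(\mathcal{R}))$, produce a nonzero scalar acting as the zero endomorphism, and note that a nonzero scalar acts by an automorphism (inverse $m_{(c-c')^{-1}}$), which cannot be constant unless the scheme is a point. Both arguments hinge on the same tension between infinite $k$ and a finite \'etale quotient, but yours has the small advantage that it never needs the multiplicative structure on $\pi_0(\mathcal{R})$ itself: it suffices that the additive group structure and the group endomorphisms $m_c$ descend functorially and that $m_c-m_{c'}=m_{c-c'}$ (an identity already valid on $\mathcal{R}$ by distributivity), so the bookkeeping you flag as the main obstacle is lighter than you fear. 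One caution: your alternative sketch via idempotents is not yet a proof as stated, since constancy of $c\mapsto (m_c)^*(f)$ on some infinite subset of $k$ does not by itself say anything about the values at $c=0$ and $c=1$; the $\pi_0$ version you propose to write up first is the one to keep.
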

\begin{proof}
Consider the connected-étale exact sequence $$0 \rightarrow \mathcal{R}^\circ \rightarrow \mathcal{R} \rightarrow \pi_0(\mathcal{R}) \rightarrow 0$$ of group schemes where $\mathcal{R}^\circ$ is the identity component of $\mathcal{R}.$ We will first show that $\pi_0(\mathcal{R})$ has a natural $k$-algebra scheme structure. Indeed, for this it is enough to show that $\mathcal{R}^\circ$ is a $k$-ideal scheme in $\mathcal{R}$. Let us start by proving that $m(\mathcal{R}^\circ \times \mathcal{R}) \subset \mathcal{R}^\circ.$ We know that the multiplication $$m:\mathcal{R}^\circ \times \mathcal{R} \rightarrow \mathcal{R}$$  takes the additive identity $e \in \mathcal{R}(k)$ to itself, i.e $m(e,x)=e$ for any $x \in \mathcal{R}(k).$ Further, the $k$-algebra structure on $\mathcal{R}^\circ$ is induced from the $k$-algebra structure on $\mathcal{R}.$ This clearly implies that $\mathcal{R}^\circ$ is a $k$-ideal scheme. Thus, the quotient $\mathcal{R}/\mathcal{R}^\circ \cong \pi_0(\mathcal{R})$ is a $k$-algebra scheme. Let us see that $\pi_0(\mathcal{R})$ is isomorphic to $\Spec k.$ One knows that the underlying algebra of $\pi_0(\mathcal{R})$ is a product of finite separable $k$-extensions. We consider $\Schk(\pi_0(\mathcal{R}),\pi_0(\mathcal{R})),$ this is a $k$-algebra (since $\pi_0(\mathcal{R})$ is a ring scheme). Because the underlying algebra of $\pi_0(\mathcal{R})$ is a finite product of finite separable field extensions, $\Schk(\pi_0(\mathcal{R}),\pi_0(\mathcal{R}))$ is a finite set.   However, for a finite set to have a $k$-algebra structure it must just contain one element, i.e it has to be the zero ring. This implies that $\pi_0(\mathcal{R}) = \Spec k$ so  $\mathcal{R}$ is geometrically connected. 
\end{proof}
\noindent Now, let us consider a Hopf algebra $H$ and denote the primitive elements of $H$ by $\Prim(H).$ We say that a Hopf algebra is primitively generated if $\Prim(H)$ generates $H$ as an algebra. Over characteristic zero all unipotent affine group schemes are primitively generated. We then have the classical Milnor-Moore theorem \cite{Milnor65}.
\begin{theorem}
For any commutative connected affine unipotent group scheme $H$ over a field of characteristic zero, the canonical map $$\Spec H \rightarrow  \Spec \Sym(\Prim(H))$$ is an isomorphism of group schemes. In particular, the underlying scheme is affine space.
\end{theorem}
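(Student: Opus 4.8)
The plan is to reduce the statement to the classical Milnor--Moore theorem for cocommutative connected Hopf algebras by dualizing, and then to translate back. Write $A_H$ for the Hopf algebra of $H$. Since $H$ is connected, $A_H$ has no nontrivial idempotents; since $H$ is unipotent, $A_H$ is coconnected (conilpotent), so its coradical filtration is exhaustive. The canonical map in question is induced by the inclusion $\Prim(A_H) \hookrightarrow A_H$, which by the universal property of the symmetric algebra extends to a Hopf algebra map $\Sym(\Prim(A_H)) \to A_H$ (here I use that in characteristic zero the primitives form a sub-Lie-coalgebra, and that $\Sym$ of a vector space, with the shuffle or the standard comultiplication, is the free commutative Hopf algebra on that space of primitives); dually this is the map $\Spec A_H \to \Spec \Sym(\Prim(A_H))$ of the statement. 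The task is to show this map is an isomorphism.

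First I would establish injectivity of $\Sym(\Prim(A_H)) \to A_H$. The key input is that in characteristic zero a coconnected commutative Hopf algebra is primitively generated --- this is exactly the remark preceding the statement, and it follows from the Poincar\'e--Birkhoff--Witt / Cartier argument: one builds a splitting of the coradical filtration using the divided-power-free structure available in characteristic zero. Primitive generation gives surjectivity of $\Sym(\Prim(A_H)) \to A_H$ immediately. For injectivity I would compare associated graded objects: filter $\Sym(\Prim(A_H))$ by the symmetric-power grading and $A_H$ by the coradical filtration; the induced map on associated gradeds is a map of graded Hopf algebras which in degree one is the identity on $\Prim(A_H)$, and since the source is free commutative on its degree-one part while the target is generated by its degree-one part (again by primitive generation applied to the graded Hopf algebra), a dimension/rank count degree by degree forces it to be an isomorphism. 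A map of filtered modules inducing an isomorphism on associated gradeds, with exhaustive filtrations, is itself an isomorphism; hence $\Sym(\Prim(A_H)) \cong A_H$, which is the desired statement, and the final sentence ("the underlying scheme is affine space") is then immediate since $\Spec \Sym(V)$ is affine space on the dual of $V$.

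The main obstacle I expect is the characteristic-zero input that coconnected commutative Hopf algebras are primitively generated, together with the bookkeeping needed to see that the comultiplication on $\Sym(\Prim(A_H))$ genuinely matches the one on $A_H$ under the generated map (i.e.\ that no corrections by higher symmetric powers are needed). This is precisely where characteristic zero is essential: in characteristic $p$ the Frobenius produces nonprimitive indecomposables (e.g.\ $\alpha_p$, $\mu_p$) and the map fails to be surjective. I would handle this by invoking the classical structure theory --- either Milnor--Moore \cite{Milnor65} directly in its Hopf-algebraic form, or the Cartier duality / Dieudonn\'e-module description of unipotent group schemes over a field of characteristic zero, under which every such group is a product of copies of $\mathbb{G}_a$ --- rather than reproving it. Given that, the rest is the formal associated-graded comparison sketched above, and no genuinely new idea beyond section~2's reductions is required.
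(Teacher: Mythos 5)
The paper offers no proof of this statement at all: it is quoted as ``the classical Milnor--Moore theorem'' with a citation to \cite{Milnor65}, which is precisely the fallback you propose, so your approach agrees with the paper's. One caution if your sketch is meant to stand on its own rather than defer to the literature: the injectivity step cannot be settled by a ``dimension/rank count degree by degree,'' because a surjection from a free commutative graded algebra onto a graded algebra generated in degree one need not be injective (truncated polynomial algebras are generated in degree one). What actually rules out such truncations is the characteristic-zero Hopf structure --- for instance, a nonzero primitive $x$ in a commutative Hopf algebra over a field of characteristic zero cannot be nilpotent, since the term $n\,x\otimes x^{n-1}$ in $\Delta(x^n)$ obstructs $x^n=0$ for minimal $n$ --- and organizing this into the statement that the associated graded is free on the primitives is the genuine content of the Milnor--Moore/Cartier theorem you are (reasonably, and consistently with the paper) citing rather than reproving.
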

\begin{remark}
Let us note that we can view $\Prim(H)$ as a Lie algebra with trivial commutator. Then the construction $\Sym(\Prim(H))$ is the same as the universal enveloping Lie algebra of $\Prim(H).$ 
\end{remark}
\noindent In \cite{BW} it is shown that if $Q$ is a plethory over a field $k$, then $\Prim(Q).$ the primitives with respect to $\delta^+_Q,$ is a cocommutative $k$-bialgebra. Briefly, the multiplication in $\Prim(Q)$ is given by the plethysm $\circ$ and the maps $$\Delta^\times:Q \rightarrow Q \otimes Q,$$  $\epsilon^\times:Q \rightarrow k$ induces a comultiplication respectively a counit on $\Prim(Q)$ making it a cocommutative counital bialgebra.   
\begin{theorem}
Let $Q$ be a plethory over a field of characteristic zero $k$. Then $Q$ is linear, i.e $$Q \cong \Sym(\Prim(Q))$$ where $\Sym(\Prim(Q))$ has the plethory structure as given in Definition 3.1. 
\end{theorem}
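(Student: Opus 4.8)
The plan is to combine the structural results on ring schemes from Section 2 with the Milnor--Moore theorem and the adjunction defining $\Sym$. Recall that $Q$ is a plethory over $k$, hence in particular a $k-k$-biring, i.e.\ an affine $k$-algebra scheme $\Spec A_Q$. The universal property stated in Definition~4.1 gives a canonical map of plethories $\Phi\colon \Sym(\Prim(Q)) \to Q$ induced by the inclusion $\Prim(Q) \hookrightarrow Q$ of bialgebras (using that $\Prim(Q)$ is a cocommutative bialgebra via the plethysm and $\Delta^\times_Q$, as recalled just before the statement). The entire content of the theorem is that $\Phi$ is an isomorphism, and since $\Phi$ is automatically a morphism of birings, it suffices to check that the underlying map of $k$-algebras is an isomorphism, equivalently that $\Spec \Phi\colon \Spec A_Q \to \Spec \Sym(\Prim(Q))$ is an isomorphism of schemes.

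First I would establish that the underlying group scheme (for coaddition) of $Q$ is connected, affine, and unipotent. Affineness and unipotence for finite-type connected ring schemes are Theorems~\ref{ringaffine} and \ref{ringunipotent}, and the extension to arbitrary affine ring schemes is the Corollary after Theorem~\ref{ringfiltered}. Connectedness is the content of the Lemma preceding the Milnor--Moore theorem: a $k$-algebra scheme of finite type over an infinite field is geometrically connected, and a general $k-k$-biring is a filtered limit of finite-type ones (Theorem~\ref{ringfiltered} applied after writing $A_Q$ as a union of sub-bialgebras of finite type), so $\pi_0$ is trivial in the limit as well; note a characteristic-zero field is automatically infinite. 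Thus $\Spec A_Q$ is a connected affine unipotent commutative group scheme over a field of characteristic zero. Milnor--Moore (the Theorem stated above) then applies: the canonical map $\Spec A_Q \to \Spec \Sym(\Prim(A_Q))$ is an isomorphism of group schemes, where $\Prim(A_Q)$ denotes the primitives for $\Delta^+_Q$ — which is exactly $\Prim(Q)$ in the notation of the statement.

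The remaining point is to identify this Milnor--Moore isomorphism with the plethory map $\Phi$. Both maps are determined on the additive-Hopf-algebra level by sending a primitive element $x \in \Prim(Q)$ to the corresponding degree-one generator of $\Sym(\Prim(Q))$; since $\Prim(Q)$ generates $A_Q$ as an algebra in characteristic zero (unipotent $\Rightarrow$ primitively generated), the two maps agree as maps of algebras, hence as maps of Hopf algebras, hence $\Phi$ is an isomorphism of the underlying birings. Finally one must check $\Phi$ respects the full plethory (monoid-in-$\BR$) structure: but $\Phi$ is a morphism of plethories by construction of the free plethory, so once it is a bijection on representing objects it is an isomorphism in $\mathcal{Pl}_k$. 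I expect the main obstacle to be the bookkeeping in the previous paragraph: verifying that the primitive-element bialgebra structure used to form $\Sym(\Prim(Q))$ (multiplication $=$ plethysm, comultiplication from $\Delta^\times_Q$) is precisely the structure that makes the Milnor--Moore comparison map a map of \emph{plethories} and not merely of group schemes — i.e.\ that $\Delta^\times_Q$ restricted to primitives, together with $\circ$, matches the bialgebra input to Definition~4.1. This is essentially the compatibility worked out in \cite{BW}, and once it is in place the theorem follows formally.
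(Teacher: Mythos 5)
Your proposal is correct and follows essentially the same route as the paper: form the free plethory on the bialgebra $\Prim(Q)$, observe that the canonical comparison map is bijective by Milnor--Moore (using connectedness, affineness, and unipotence of the underlying ring scheme from Section 2), and conclude via the universal property that it is an isomorphism of plethories. Your version spells out the verification that the Milnor--Moore map coincides with the universal map in somewhat more detail than the paper does, but the argument is the same.
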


\begin{proof}
Suppose that $Q$  is a plethory over $k$. $\Prim(Q)$  naturally has a bialgebra structure as explained above. Given this, we can form the free plethory on $\Prim(Q),$ $\Sym(\Prim(Q)).$ We always have a natural map $$v:\Sym(\Prim(Q)) \rightarrow Q$$ of Hopf algebras, and this is bijective by Milnor-Moore. Thus to show that any plethory is linear, it suffices to show that this is actually a morphism of plethories. But this is clear: the pair $\Sym(\Prim(Q))$ and $$j:\Prim(Q) \rightarrow \Sym(\Prim(Q))$$ is initial in the category of pairs consisting of a plethory $P$ and a morphism $f:\Prim(Q) \rightarrow P$ of bialgebras. It is immediate that the canonical map  $v$ is induced by this universal property, when we note that there clearly is a map $\Prim(Q) \rightarrow Q$ of bialgebras. We will of course need to show that $v$ is an isomorphism in the category of plethories. This follows easily from the fact that $v$ is an isomorphism of affine schemes and thus has an inverse in the category of affine schemes. What remains to be checked is that this inverse is a morphism of plethories, but this is immediate since $v$ is.
\end{proof}
\begin{corollary}
Let $k$ be a field of characteristic zero. Then the category of plethories over $k$ is equivalent to the category of cocommutative $k$-bialgebras.
\end{corollary}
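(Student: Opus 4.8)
The final statement is the corollary asserting that, in characteristic zero, the category of plethories over $k$ is equivalent to the category of cocommutative $k$-bialgebras.

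\medskip

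The plan is to promote the bijection-on-objects statement of the preceding theorem (every plethory is of the form $\Sym(\Prim(Q))$) into a genuine equivalence of categories, by exhibiting the free-plethory functor $\Sym(-):\Bialgk \to \mathcal{Pl}_k$ and the primitives functor $\Prim(-):\mathcal{Pl}_k \to \Bialgk$ as mutually quasi-inverse. First I would recall that $\Sym$ is left adjoint to the forgetful functor $F=\Prim:\mathcal{Pl}_k \to \Bialgk$ (this adjunction, and the universal property of $\Sym(A)$, is exactly Definition 4.1 in the excerpt), so the unit $\eta_A:A \to \Prim(\Sym(A))$ and counit $\varepsilon_Q:\Sym(\Prim(Q)) \to Q$ are already in hand; the map $\varepsilon_Q$ is precisely the morphism $v$ of the previous theorem. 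To get an equivalence from an adjunction it suffices to check that unit and counit are both isomorphisms. The counit $\varepsilon_Q=v$ is an isomorphism of plethories for every $Q$ by the theorem just proved (its bijectivity as a Hopf-algebra map is Milnor–Moore, and the inverse is automatically a plethory map since $v$ is). So the only remaining point is that the unit $\eta_A:A \to \Prim(\Sym(A))$ is an isomorphism of bialgebras for every cocommutative $k$-bialgebra $A$.

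\medskip

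For that last point I would argue as follows. By the triangle identities for the adjunction, the composite $\Sym(A) \xrightarrow{\Sym(\eta_A)} \Sym(\Prim(\Sym(A))) \xrightarrow{\varepsilon_{\Sym(A)}} \Sym(A)$ is the identity; since $\varepsilon_{\Sym(A)}$ is an isomorphism (by the previous paragraph applied to $Q=\Sym(A)$), it follows that $\Sym(\eta_A)$ is an isomorphism, hence $\eta_A$ itself is injective. For surjectivity one uses that, in characteristic zero, the primitives of a symmetric algebra on a bialgebra are computed degree by degree: $\Prim(\Sym(A)) = \Prim(\Sym(\Prim(A)^{\mathrm{prim-triv}}))$, and for the symmetric (= cofree-cocommutative on the underlying coalgebra in the graded sense) situation the only primitives sit in degree one, i.e. $\Prim(\Sym(A)) = A$ canonically, because $\mathbb{Q} \subseteq k$ forces the natural coalgebra filtration to split. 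Concretely, $\Prim$ of a polynomial Hopf algebra over a field of characteristic zero recovers exactly the degree-one part, which is $A$; this is again an instance of Milnor–Moore / the structure theory of graded connected Hopf algebras over $\mathbb{Q}$. Combining injectivity and this identification, $\eta_A$ is an isomorphism.

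\medskip

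The main obstacle is the verification that $\eta_A$ is an isomorphism of \emph{bialgebras} (not merely of vector spaces or coalgebras): one must check that the identification $\Prim(\Sym(A)) \cong A$ respects the multiplication induced on primitives by the plethysm $\circ$ on $\Sym(A)$ and matches it with the original multiplication $m$ of $A$, and likewise for the comultiplication $\Delta^\times$ restricted to primitives versus $\Delta$ on $A$. This is a compatibility chase through Definition 4.1: the plethysm on $\Sym(A)$ is $\Sym(A)\odot\Sym(A) \cong \Sym(A\otimes A) \xrightarrow{\Sym(m)} \Sym(A)$, so on the degree-one primitive part it restricts exactly to $m$, and $\Delta^\times$ on $\Sym(A)$ is induced from $\Delta$ on $A$, so it restricts to $\Delta$ on primitives; the counit matching is immediate. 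Once these compatibilities are recorded, the two natural transformations $\Sym\circ\Prim \Rightarrow \mathrm{id}_{\mathcal{Pl}_k}$ and $\mathrm{id}_{\Bialgk} \Rightarrow \Prim\circ\Sym$ are natural isomorphisms, and hence $\Sym$ and $\Prim$ are mutually quasi-inverse equivalences, which is the assertion of the corollary. I would remark at the end that naturality of $v$ is clear because it is the counit of an adjunction, so no separate naturality check is needed.
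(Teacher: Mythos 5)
Your proposal is correct and follows essentially the same route as the paper: the paper's proof likewise observes that the counit of the $\Sym \dashv \Prim$ adjunction is an isomorphism by the preceding theorem and that the unit is "immediate" to check, which is exactly the content you supply (with more detail on why $\Prim(\Sym(A)) \cong A$ in characteristic zero).
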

\begin{proof}
By the above theorem, the counit map is an isomorphism and it is immediate to see that the unit map is an isomorphism as well. 
\end{proof}
\begin{remark}
If $k$ is a field of characteristic zero and $Q$ a plethory over $k,$ this shows that the category of $Q$-rings is equivalent to the category of rings with an action of the bialgebra $P(Q).$ 
\end{remark}
\section{Some classification results in characteristic $p >0.$}
\noindent In this section we will start a classification for plethories over a perfect field $k$ of characteristic $p.$ Our classification results here only apply to a certain class of plethories. We state future research directions, as well as give some ''pathological'' examples which a complete classification must take into account. For any scheme $X$ over $k$  with structure map $f:X \rightarrow \Spec k$ we let $G^p $ be the pullback of $f$ along $F:\Spec k \rightarrow \Spec k,$ the Frobenius.
\\ \noindent \\ Let us briefly recall that for perfect fields $k,$ group schemes over $k$ have two especially important maps, the (relative) Frobenius $$F_G: G \rightarrow G^p \cong G$$ and the Verschiebung $$V_G:G \cong G^p \rightarrow G.$$ These satisfy the property that $F_GV_G=V_GF_G=p.$ A ring scheme $\mathcal{R}$ is called elementary unipotent if $V_{\mathcal{R}}=0,$ i.e the Verschiebung is zero. Call a plethory $Q$ weakly linear if there is a map of plethories $f:P \rightarrow Q$ where $P$ is a linear plethory (as defined in the previous section) such that $f$ when viewed as a map of algebras is surjective. This will, in particular, imply that $Q$ is primitively generated and is a quotient of $P$ by a $P-P$-ideal as defined in \cite{BW}. Not all plethories over a perfect field $k$ are primitively generated, as the following example shows (built on an example from \cite{Takeuchi}, Remark 1.6.2).
\begin{example}
Let $G$ be the group scheme $$\mathbb{G}_a \times_f \alpha_p$$ which as a scheme, is just $\mathbb{G}_a \times \alpha_p.$  We let the the group structure be given by $$(g_1,h_1)(g_2,h_2) = (g_1g_2,h_1+h_2+f(g_1,g_2))$$  for $$g_1,g_2,h_1,h_2 \in \mathbb{G}_a(R) \times \alpha_p(R)$$ where $f(x,y) = ((x+y)^p-x^p-y^p)/p.$ This is a $p$-torsion group scheme but is not elementary unipotent. One can define a non-unital ring scheme structure on $G$ be definining the multiplication to be trivial and then, when $k$ is finte, i.e $k \cong \mathbb{F}_q$  ''unitalize'' this by taking the direct product with $$\underline{\mathbb{F}_q} = \coprod_{a \in \mathbb{F}_q} \mathbb{F}_q$$ to get a ring scheme, as we did in Example 3.3. The underlying group scheme of this ring scheme is clearly not elementary unipotent, since the Verschiebung acts on each factor separately. Taking the free plethory on a biring (see \cite{BW} 2.1 ) will then give us a plethory with its underlying group scheme not elementary unipotent.
\end{example}
\noindent Another feature which differs from the case over a field of characteristic zero is that there are plethories which have a non-trivial multiplicative subgroups. This stems from the fact that there are ring schemes with non-trivial multiplicative subgroups.
\begin{example}
Consider $\mu_p = \Spec k[x,x^{-1}]/(x-1)^p$ with comultiplication $\Delta:x \rightarrow x \otimes x$ and counit $\epsilon(x) = 1.$ This is an example of a multiplicative group scheme which is p-torsion and we can as before define a trivial multiplication on $\mu_p,$ making it a non-unital ring scheme. We can then as previously stated, for finite fields, unitalize it to get a ring scheme by taking the direct product with $$\underline{\mathbb{F}_q}$$and after that we can form the free plethory to get a plethory $Q$ with a non-trivial multiplicative subgroup.The fact that it has a non-trivial multiplicative subgroup comes from , for example, the fact that there is a non-zero homomorphism of group schemes $\mu_p \rightarrow Q.$  
\end{example}
\noindent These two examples are rather artificial, but they show that plethories behave wildly different in characteristic $p >0$ than in characteristic $0.$
We know that for any group scheme $G$ over a perfect field $k$ of characteristic $p>0 ,$ the group $\Prim(G)$ of primitive elements has a natural action of the Frobenius, taking $x \in \Prim(G)$ to $x^p.$ In fact, $\Prim(G)$ becomes a module over a certain ring. As we previously stated, $\Prim(G) = \Hom(G, \mathbb{G}_a).$ We thus have that $\Prim(G)$ is naturally a module over the endomorphism ring $\End(\mathbb{G}_a,\mathbb{G}_a).$ 
\begin{definition}
Let $k\langle F \rangle$ be the non-commutative polynomial ring over $k$ in one variable $F$ with multplication given by, for $a \in k$ $aF = F_k(a)a$ where $F_k$ is the Frobenius endomorphism of $k.$
\end{definition}
\noindent It is a quick calculation to show that $\End(\mathbb{G}_a,\mathbb{G}_a) \cong k\langle F \rangle.$ We now see that $\Prim(G)$ is a module over $k\langle F \rangle.$  Let us denote the category of modules over $k\langle F \rangle$ by $Mod_{k\langle F \rangle}.$ Given a $k\langle F \rangle$-module $M$ one can construct an elementary unipotent group scheme $\Sym^{[p]}(M)$ as follows (for details we refer the reader to \cite{MilneiAG}) . Form $\Sym(M),$ the symmetric algebra on $M,$ with its obvious Hopf algebra structure and consider the map $j:M \rightarrow \Sym(M).$ We then quotient out by the ideal generated by the elements $$j(Fx)-j(x)^p$$ to get $\Sym^{[p]}(M).$  One notes that for any commutative algebraic group $G$ one always has a map $G \rightarrow \Sym^{[p]}(\Prim(G)).$ We have the following classical theorem (see \cite[IV,$\mathsection 3,$ Proposition $6.6$]{DemazureG})
\begin{theorem}
Let $G$ be an affine group scheme. The following are equivalent: \\
(i) The Verschiebung $V_G$ is zero. \\
(ii) $G$ is a closed subgroup of $\mathbb{G}_a^r$ for some $r.$ \\
(iii) The canonical homomorphism $G \rightarrow \Sym^{[p]}(\Prim(G))$ is an isomorphism.
\end{theorem}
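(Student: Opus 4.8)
I read the statement as concerning a \emph{commutative} affine group scheme $G$ over the perfect field $k$ (the Verschiebung and $\Sym^{[p]}(\Prim(G))$ require this), and in (ii) I allow the exponent $r$ to be an arbitrary set, finite when $G$ is of finite type. The plan is to prove $(i)\Rightarrow(ii)\Rightarrow(iii)\Rightarrow(i)$, with the redundant-but-instructive $(ii)\Rightarrow(i)$ as a warm-up: the Verschiebung is a natural transformation, so a closed immersion $\iota\colon G\hookrightarrow \mathbb{G}_a^r$ gives a commutative square comparing $V_G$ with $V_{\mathbb{G}_a^r}=(V_{\mathbb{G}_a})^{\times r}$, and since $V_{\mathbb{G}_a}=0$ (immediate on $k[e]$, or: the Dieudonné module of $\mathbb{G}_a$ is $k\langle F\rangle$ with $V$ acting by $0$) and $\iota$ is a monomorphism, $V_G=0$. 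Likewise for $(iii)\Rightarrow(i)$: one first checks $\Sym^{[p]}(k\langle F\rangle)\cong\mathbb{G}_a$ — the relations $j(Fx)-j(x)^p$ collapse the symmetric algebra on a $k$-basis of $k\langle F\rangle$ down to $k[y]$ — and then, since $\Sym^{[p]}$ carries a free presentation $k\langle F\rangle^{(J)}\to k\langle F\rangle^{(I)}\to\Prim(G)\to 0$ of the $k\langle F\rangle$-module $\Prim(G)$ to a realisation of $\Sym^{[p]}(\Prim(G))$ as the kernel of a homomorphism $\mathbb{G}_a^I\to\mathbb{G}_a^J$, the hypothesis $G\cong\Sym^{[p]}(\Prim(G))$ exhibits $G$ as a closed subgroup of $\mathbb{G}_a^I$; hence $V_G=0$ as before (and this is already $(ii)$ once one knows $\Prim(G)$ is finitely generated over $k\langle F\rangle$ when $G$ is of finite type).

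The real content is $(i)\Rightarrow(ii)$. First, $V_G=0$ forces $p=V_GF_G=0$ on $G$; combining this with the canonical multiplicative--unipotent decomposition of a commutative affine group scheme over a perfect field and the fact that a nonzero group of multiplicative type (and any nontrivial semiabelian part) has nonzero Verschiebung, one gets that $G$ is unipotent and annihilated by $p$. Now I would invoke the Dieudonné dictionary over a perfect field in the shape: $\Prim(-)=\Hom(-,\mathbb{G}_a)$ is an anti-equivalence between the category of unipotent commutative affine group schemes with $V=0$ and the category of $k\langle F\rangle$-modules, sending $\mathbb{G}_a$ to $k\langle F\rangle$ and closed immersions to surjections of modules (\cite{DemazureG}). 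Because every $k\langle F\rangle$-module is a quotient of a free one, $\Prim(G)$ is a quotient of some $k\langle F\rangle^{(I)}$, which dually presents $G$ as a closed subgroup of $\mathbb{G}_a^I$; when $G$ is of finite type, $\Prim(G)$ is finitely generated and $I$ finite, which is exactly $(ii)$.

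For $(ii)\Rightarrow(iii)$, a closed immersion $G\hookrightarrow\mathbb{G}_a^r$ presents the Hopf algebra $A_G$ as a quotient of $k[t_1,\dots,t_r]$ with the $t_i$ going to primitive algebra generators, so $G$ is primitively generated; the canonical Hopf-algebra surjection $\Sym(\Prim(G))\twoheadrightarrow A_G$ kills each $Fx-x^p$ and hence factors through $\Sym^{[p]}(\Prim(G))$, giving a \emph{closed immersion} $G\to\Sym^{[p]}(\Prim(G))$. The delicate point — and the main obstacle of the whole theorem — is to see that this closed immersion is an \emph{isomorphism}, i.e.\ that there are no relations among the primitives of $G$ beyond $Fx=x^p$. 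For this one needs that $\Prim(-)$ is exact on the category of $V=0$ group schemes over a perfect field (equivalently, that $\mathbb{G}_a$ is injective there, so that every primitive character of a closed subgroup of $\mathbb{G}_a^r$ extends to $\mathbb{G}_a^r$); granting this, $\Prim(G)$ carries precisely the $k\langle F\rangle$-presentation out of which $\Sym^{[p]}(\Prim(G))$ is built, and a short comparison via the five lemma finishes it. Everything else is formal manipulation of primitives, the relation $Fx=x^p$, and naturality of $V$; the hard input — where perfectness of $k$ is used essentially and where the Cartier--Dieudonné machinery of \cite{DemazureG} cannot be replaced by a soft categorical argument — is exactly this exactness/injectivity assertion.
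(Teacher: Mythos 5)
The paper offers no proof of this statement: it is quoted as a classical theorem with a pointer to Demazure--Gabriel \cite[IV, \S 3, Proposition 6.6]{DemazureG}, so there is no internal argument to compare yours against. Your proposal is a correct reconstruction of the standard Demazure--Gabriel argument. The implications $(ii)\Rightarrow(i)$ and $(iii)\Rightarrow(i)$ via naturality of $V$, the computation $\Spec \Sym^{[p]}(k\langle F\rangle)\cong\mathbb{G}_a$, and the fact that $\Spec\Sym^{[p]}(-)$ turns cokernels of $k\langle F\rangle$-modules into kernels of group schemes (it is the adjoint of $\Prim$) are all sound; and you correctly isolate the two places where genuine input is needed, namely the multiplicative--unipotent decomposition over a perfect field together with the Dieudonn\'e anti-equivalence for $(i)\Rightarrow(ii)$, and the injectivity of $\mathbb{G}_a$ among $V=0$ affine commutative groups (equivalently, exactness of $\Prim$) for $(ii)\Rightarrow(iii)$. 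Since both of these inputs are themselves taken from \cite{DemazureG}, your write-up is a structured reduction to the same source the paper cites rather than an independent proof, which is entirely appropriate here. One detail worth making explicit in your five-lemma step: writing $H=\mathbb{G}_a^r/G$, you need the canonical map $H\to\Spec\Sym^{[p]}(\Prim(H))$ to be a monomorphism in order to identify $\ker(\mathbb{G}_a^r\to H)$ with $\ker\bigl(\mathbb{G}_a^r\to\Spec\Sym^{[p]}(\Prim(H))\bigr)$; this follows by applying the already-established implication $(i)\Rightarrow(ii)$ to $H$ (a quotient of $\mathbb{G}_a^r$, hence with $V_H=0$), so the implications must be proved in that order.
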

\begin{remark}
What we call $\Sym^{[p]}(\Prim(Q))$ is the same as the enveloping $p$-algebra (also called the restricted universal enveloping algebra) on the $p$-Lie algebra $\Prim(Q)$ where $\Prim(Q)$ has trivial commutator.
\end{remark}
\begin{lemma}
When $Q$ is a plethory, then $\Sym^{[p]}(\Prim(Q))$ has the structure of a plethory.
\end{lemma}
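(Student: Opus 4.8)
The plan is to endow $B:=\Sym^{[p]}(\Prim(Q))$ with a $k$-biring structure together with an associative plethysm $B\odot_k B\to B$ and unit $k[e]\to B$, in close parallel with the construction of the free linear plethory in Definition 4.1. Recall first the raw material: by \cite{BW}, $\Prim(Q)$ is a cocommutative counital $k$-bialgebra, with multiplication the plethysm $\circ$ and with comultiplication and counit the restrictions of $\Delta^\times_Q$ and $\epsilon^\times_Q$; and under the identification $\Prim(Q)=\Hom(Q,\mathbb{G}_a)$ it is a module over $\End(\mathbb{G}_a,\mathbb{G}_a)\cong k\langle F\rangle$, with $F$ acting as $x\mapsto x^{p}$, the $p$-th power inside the $k$-algebra $Q$. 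Since $\Delta^\times_Q$ and $\epsilon^\times_Q$ are $k$-algebra homomorphisms and $\Prim(Q)$ is closed under $\Delta^\times_Q$, this $F$-action is compatible with the coalgebra structure, so $\Prim(Q)$ is an object of $\Bialgkp$.

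Now write $B=\Sym(\Prim(Q))/I$, where $I$ is the ideal generated by the elements $g_x:=j(Fx)-j(x)^{p}$ and $j\colon\Prim(Q)\to\Sym(\Prim(Q))$ is the canonical map. By the construction recalled before the statement, $B$ is a commutative Hopf algebra — an elementary unipotent group scheme — and this supplies the coaddition $\Delta^+$, the additive counit $\epsilon^+$ and the antipode $S$. For the multiplicative comultiplication and counit I take $\Delta^\times$ and $\epsilon^\times$ to be induced, on the generators $j(m)$, by the comultiplication and counit of the bialgebra $\Prim(Q)$; using that $\Delta^\times_Q,\epsilon^\times_Q$ are algebra maps and that $(a+b)^p=a^p+b^p$ in characteristic $p$, one checks that $\Delta^\times(g_x)\in I\otimes\Sym(\Prim(Q))+\Sym(\Prim(Q))\otimes I$ and $\epsilon^\times(g_x)=0$, so both descend to $B$. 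Coassociativity and cocommutativity of $\Delta^\times$, the counit axiom, and codistributivity of $\Delta^\times$ over $\Delta^+$ then hold on $B$ because they hold on the generators, where they are the corresponding identities inside $Q$.

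The delicate piece is the co-$k$-algebra structure $\beta\colon k\to\Ringk(B,k)$, and this is exactly where the hypothesis that $Q$ be a plethory (and not merely that $\Prim(Q)$ be an arbitrary object of $\Bialgkp$) enters. One cannot just restrict the co-$k$-structure of $\Sym(\Prim(Q))$: already for $Q=\mathbb{G}_a$ the naive candidate $c\mapsto c\,\epsilon_{\Prim(Q)}$ fails to be $F$-equivariant, hence does not kill $I$, unless $c\in\mathbb{F}_p$. Instead I use $\beta_Q$: for $c\in k$ the map $\beta_Q(c)\colon Q\to k$ is a ring homomorphism, so its restriction to $\Prim(Q)$ is $k$-linear and satisfies $\beta_Q(c)(Fx)=\beta_Q(c)(x^{p})=\beta_Q(c)(x)^{p}$, i.e.\ is a $k\langle F\rangle$-module map $\Prim(Q)\to k$ and hence a $k$-point of $B$; that $c\mapsto\beta_Q(c)|_{\Prim(Q)}$ is a ring homomorphism $k\to\Ringk(B,k)$ is seen most cleanly by factoring it through the map $\Ringk(Q,k)\to\Ringk(B,k)$ dual to the canonical Hopf-algebra map $B\to Q$ (given on generators by $j(m)\mapsto m$, well defined because $j(Fx)$ and $j(x)^p$ both map to $x^p$). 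This gives $\beta$ and completes $B$ to a $k$-biring.

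It remains to produce the plethysm and unit. Following Definition 4.1, the plan is to establish the analogue of the isomorphism $\Sym(A)\odot\Sym(A)\cong\Sym(A\otimes A)$ in the form $B\odot_k B\cong\Sym^{[p]}(\Prim(Q)\boxtimes\Prim(Q))$ for a suitable $k\langle F\rangle$-module $\Prim(Q)\boxtimes\Prim(Q)$ built from the $k\langle F\rangle$-module structure of $\Prim(Q)$ by the relevant adjunction, and to define $\circ$ as $\Sym^{[p]}$ of the multiplication $\Prim(Q)\boxtimes\Prim(Q)\to\Prim(Q)$; associativity and unitality then follow from those of $\circ$ on $\Prim(Q)$, and the unit $k[e]\to B$ from the identification $k[e]=\Sym^{[p]}(\Prim(\mathbb{G}_a))$ of the monoidal unit. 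The main obstacle throughout is precisely the interaction of the $F$-relation $j(Fx)=j(x)^p$ with the co-$k$-algebra structure and with $\odot$: once one has chosen the correct (twisted) data for $\beta$ as above and has checked that the plethysm respects the relation — which, as for $\beta$, fails for the naive candidates but works because $F$ is compatible with $\circ$ on $Q$ — every remaining biring and monoid axiom transfers mechanically from $Q$ along $B\to Q$.
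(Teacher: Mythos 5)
Your argument is incomplete at the decisive step: the construction of the plethysm. Everything up to your last paragraph at most makes $B=\Sym^{[p]}(\Prim(Q))$ into a $k$-$k$-biring; the last paragraph, which must supply the monoid structure $B\odot_k B\to B$, is explicitly only a plan. The object $\Prim(Q)\boxtimes\Prim(Q)$ is never defined, the asserted isomorphism $B\odot_k B\cong\Sym^{[p]}(\Prim(Q)\boxtimes\Prim(Q))$ is not proved, and the one genuinely delicate verification --- that the candidate multiplication is compatible with the relation $j(Fx)=j(x)^p$, i.e.\ that the ideal it generates is preserved under $\odot$ on both sides --- is deferred with the phrase ``once one has \dots checked that the plethysm respects the relation''. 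That check, together with associativity and unitality, is precisely the content of the lemma, so as written the proof does not establish it. The paper avoids rebuilding any structure: it observes that $\Sym(\Prim(Q))$ is already a plethory (the free one on the bialgebra $\Prim(Q)$, Definition 4.1), that $\Sym^{[p]}(\Prim(Q))$ is its quotient by the ideal $J$ generated by the elements $j(x)^p-j(x^p)$, and that a quotient of a plethory by a $P$-$P$-ideal in the sense of Borger--Wieland is again a plethory; this reduces the lemma to a short list of conditions on the generators of $J$ (stability under $\Delta^+$ and $\Delta^\times$, vanishing under $\beta(c)$, and the $\odot$-stability), each checked by a direct computation. If you want to complete your proof, the most economical repair is to switch to that quotient argument; in particular the $\odot$-stability is exactly the ``plethysm respects the relation'' statement you postponed, and it does require an argument (the key identity is $(Fx)\circ y=x^p\circ y=(x\circ y)^p=F(x\circ y)$ for primitives $x,y$, but one must still verify that the full generated ideal, not just the generators, is carried into $J$).

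Two smaller remarks. Your observation about the co-$k$-algebra structure is correct and is, if anything, the point the paper treats too briskly: with the naive $\beta(c)(j(x))=c\,\epsilon^\times(x)$ on $\Sym(\Prim(Q))$ one gets $\beta(c)\bigl(j(x)^p-j(x^p)\bigr)=(c^p-c)\,\epsilon^\times(x)^p$, which is nonzero already for $x=e$ and $c\notin\mathbb{F}_p$; the structure that works is the one pulled back from $\beta_Q$ along the canonical map to $Q$, exactly as you propose, and this is what the paper's ``$\beta_Q(c)(S)=0$, this is clear'' implicitly uses. Your verification that $\Delta^\times$ descends is also fine and coincides with the paper's computation using $(u+v)^p=u^p+v^p$. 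But neither of these substitutes for the missing construction of $\circ$.
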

\begin{proof}
We know that $\Prim(Q)$ has a $k\langle F \rangle$ module structure where the action of $F$ is just taking the $p$th power. Further, $\Sym^{[p]}(\Prim(Q))$ is the quotient of $\Sym(\Prim(Q)),$  which we know is a plethory, by the ideal $J$ generated by $j(x)^{p}-j(x^p)$, where $j:\Prim(Q) \rightarrow \Sym(\Prim(Q))$ is the inclusion in degree $1.$ It now suffices to show that this is a $Q-Q$-ideal (see \cite{BW} 6.1) for $U^{[p]}(\Prim(Q))$ to be a plethory. This is equivalent to showing that for a generating set $S$ of $J$ that $$\Delta^+_Q(S) \subset Q \otimes J + J \otimes Q,$$  $$\Delta_Q^{\times}(X) \subset Q \otimes J + J \otimes Q,$$ and $$\beta_Q(c)(S)=0$$  $\forall c \in k$ and that $$\Prim(Q) \odot X \odot Q \subset J.$$ The first is immediate, since taking $S$ to be the set of all $j(x)^{p}-j(x^p),$ we have $$\Delta^+(j(x)^{p})-\Delta^+(j(x^p)) = \Delta^+(j(x))^{p}-(j(x^p) \otimes 1 + 1 \otimes j(x^p))$$ which is equal to $$j(x)^{p} \otimes 1 + 1 \otimes j(x)^{p}  -j(x^p) \otimes 1 - 1 \otimes j(x^p) \subset J \otimes Q + Q \otimes J.$$ Further, $$\Delta^\times(j(x)^{p})-\Delta^\times(j(x^p)) = \sum_i j(x_i^{[1]})^{p} \otimes j(x_i^{[2]})^{p} - \sum_i j((x_i^{[1]})^p) \otimes j((x_i^{[2]})^p)$$  and this is equal to $$\sum_ij(x_i^{[1]})^{p} \otimes (j(x_i^{[2]})^{p}-j((x_i^{[2]})^p)) + \sum ((j((x_i^{[1]})^p)-j(x_i^{[1]})^p) \otimes j((x_i^{[2]})^p))$$ but this is in $J \otimes P + P \otimes J.$  We also need to show that $\beta_Q(c)(S)=0,$ this is clear.  The last containment is similarily easy to verify.
\end{proof}
\begin{theorem}
When $Q$ is a plethory over a perfect field such that $V_Q=0,$ then $\Sym^{[p]}(\Prim(Q)) \cong Q.$  We then say that $Q$ is a $p$-linear plethory.
\end{theorem}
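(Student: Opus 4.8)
The plan is to run the characteristic zero argument again (compare the proof that every plethory over a field of characteristic zero is linear), with the structure theorem recalled above for affine group schemes of vanishing Verschiebung playing the role that Milnor--Moore played there.

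First I would record that $\Prim(Q)$ is naturally an object of $\Bialgkp$: it is a cocommutative counital bialgebra by \cite{BW}, and the identification $\Prim(Q) = \Hom(\Spec A_Q, \mathbb{G}_a)$ together with $\End(\mathbb{G}_a,\mathbb{G}_a) \cong k\langle F\rangle$ endows it with a $k\langle F\rangle$-module structure in which $F$ acts by $x \mapsto x^p$. By the preceding lemma, the quotient $\Sym^{[p]}(\Prim(Q))$ of the free plethory $\Sym(\Prim(Q))$ by the $Q$--$Q$-ideal $J$ generated by the elements $j(x)^p - j(x^p)$ carries a plethory structure. The next point to check is that, with the inclusion $j\colon \Prim(Q) \to \Sym^{[p]}(\Prim(Q))$, the pair $(\Sym^{[p]}(\Prim(Q)), j)$ is initial among pairs $(P,f)$ consisting of a plethory $P$ and a morphism $f\colon \Prim(Q) \to P$ of $p$-bialgebras; this is exactly the assertion from the introduction that $\Sym^{[p]}$ is left adjoint to the forgetful functor $\mathcal{Pl}_k \to \Bialgkp$, and it follows from the universal property of $\Sym(\Prim(Q))$ as the free plethory on the bialgebra $\Prim(Q)$ together with the fact that the ideal $J$ is precisely what must be divided out in order that a $k\langle F\rangle$-linear (rather than merely $k$-linear) map out of $\Prim(Q)$ extend.

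Next, the tautological morphism of $p$-bialgebras $\Prim(Q) \to Q$ induces, by this universal property, a morphism of plethories
$$v\colon \Sym^{[p]}(\Prim(Q)) \longrightarrow Q .$$
I would then identify $v$, viewed as a homomorphism of Hopf algebras, with the map dual to the canonical homomorphism $G \to \Sym^{[p]}(\Prim(G))$ of the structure theorem above, where $G = \Spec A_Q$ is the underlying additive group scheme of $Q$. Since $V_Q = 0$ means exactly that the Verschiebung of this $G$ vanishes, part (iii) of that theorem gives that the canonical homomorphism is an isomorphism, so $v$ is an isomorphism of affine schemes.

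Finally, as in the characteristic zero case, an isomorphism of affine schemes has an inverse in the category of affine schemes, and because $v$ is a morphism of plethories its inverse is one as well; hence $v$ is an isomorphism of plethories and $Q \cong \Sym^{[p]}(\Prim(Q))$. I expect the only genuine work to be the bookkeeping in the middle step: verifying that $\Sym^{[p]}(\Prim(Q))$ really has the universal property of the free plethory on the $p$-bialgebra $\Prim(Q)$, and that the canonical plethory map it produces coincides with the canonical group scheme map of the Demazure--Gabriel theorem, so that the two senses of ``canonical'' may be used interchangeably. All of the structural content — that this map is bijective — is handed to us by that theorem, just as Milnor--Moore handed it to us in characteristic zero.
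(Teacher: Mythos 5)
Your proof is correct and takes essentially the same route as the paper's: both obtain the comparison map from the universal property of the free construction (equivalently, by factoring the free-plethory map $\Sym(\Prim(Q))\to Q$ through the quotient by the ideal $J$) and then invoke part (iii) of the Demazure--Gabriel structure theorem for $V=0$ to conclude bijectivity, upgrading to an isomorphism of plethories exactly as in the characteristic-zero case. If anything, your write-up is more careful about the direction of the canonical map than the paper's one-line argument.
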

\begin{proof}
All one has to verify is that the canonical map $f:Q \rightarrow \Sym^{[p]}(\Prim(Q))$ is a map of plethories. But this is obvious since this map is just the composition of the two plethory maps $Q \rightarrow \Sym(\Prim(Q))$ and $\Sym(\Prim(Q)) \rightarrow \Sym^{[p]}(\Prim(Q)).$  
\end{proof}
\begin{remark}
We have seen that plethories need not be elementary unipotent and not purely unipotent either (i.e it can have a non-trivial multiplicative subgroup) Let us note that there can be no non-trivial finite plethories over an infinite perfect field $k.$ Indeed, from what we have seen all plethories $Q$ are connected over an infinite field. By classical Dieudonné theory we can then decompose $Q$ as $Q = Q^{loc,red} \times Q^{loc,loc}.$ This would imply that the Frobenius is nilpotent, but this can never happen: the Frobenius is always a map of ring schemes. 
\end{remark}
\noindent It seems to us that to classify plethories over a perfect field one should establish an extension of ordinary Dieudonné theory to account for ring schemes, which has been done to some extent by Hedayatzadeh in \cite{HadiT} and for Hopf rings by Goerss \cite{GoerssH} and Buchstaber-Lazarev \cite{Buchstaber}. Note that Hedayatzadeh work with finite / profinite group schemes and with local group schemes, which  limits their applications to ring schemes since we have seen that there are no non-trivial finite connected ring schemes over a perfect field. 

\bibliography{ref}{}
\bibliographystyle{plain}

\end{document}